\newcommand{\NN}{{\mathbb{N}}}  
\newcommand{\ZZ}{\mathbb{Z}}
\newcommand{\QQ}{\mathbb{Q}}
\renewcommand{\phi}{\varphi}
\renewcommand{\lim}{\mbox{lim}}
\newcommand{\fa}{\forall}
\renewcommand{\tilde}{\widetilde}
\newtheorem{theorem}{Theorem}[section]
\newtheorem{definability lemma}[theorem]{Definability Lemma}
\newtheorem{proposition}[theorem]{Proposition}
\newtheorem{definition}[theorem]{Definition}
\newtheorem{lemma}[theorem]{Lemma}
\newtheorem{corollary}[theorem]{Corollary}
\newtheorem{question}[theorem]{Question}
\newlength{\tri}
\newcommand{\bq}{\begin{quotation}\setlength{\parindent}{0in} }
\newcommand{\eq}{\end{quotation}}
\newcommand{\be}{\begin{enumerate}}
\newcommand{\ee}{\end{enumerate}}
\newcommand{\bee}{\begin{enumerate}}
\newcommand{\eee}{\end{enumerate}}
\newcommand{\bi}{\begin{itemize}}
\newcommand{\ei}{\end{itemize}}
\newcommand{\beq}{\begin{equation}}
\newcommand{\eeq}{\end{equation}}
\newcommand{\bc}{\begin{center}}
\newcommand{\ec}{\end{center}}
\begin{document}
\begin{center}
\LARGE{\bf Describing groups using first-order language}\\[10pt]
\large{Yuki Maehara}\\
\large{Supervisor: Andr\'e Nies}
\end{center}
\vspace{20pt}

\section{Introduction}
How can large groups be described efficiently?
Of course one can always use natural language, or give presentations to be more rigorous, but how about using formal language?
In this paper, we will investigate two notions concerning such descriptions; \emph{quasi-finite axiomatizability}, concerning infinite groups, and \emph{polylogarithmic compressibility}, concerning classes of finite groups.

An infinite group is said to be \emph{quasi-finitely axiomatizable} if it can be described by a single first-order sentence, together with the information that the group is finitely generated (which is not first-order expressible).
In first-order language, the only parts of a sentence that can contain an infinite amount of information are the quantifiers; $\exists$ and $\forall$.
Since the variables correspond to the elements in the language of groups, we can only ``talk'' about elements, but not, for example, all subgroups of a given group.
We give several examples of groups that can be described in this restricted language, with proofs intended to be understandable even to undergraduate students.

We say a class of finite groups is \emph{polylogarithmically compressible} if each group in the class can be described by a first-order sentence, whose length is order polylogarithmic (i.e.~polynomial in $\mathtt{log}$) to the size of the group.
We need a restriction on the length of the sentence because each finite group can be described by a first-order sentence.
The most standard (and inefficient) way to do so is to describe the whole Cayley table, in which case the length of the sentence has order square of the size of the group.
The examples given in this paper include the class of finite simple groups (excluding a certain family), and the class of finite abelian groups.

\section{Preliminaries} \label{preliminaries}
\subsection{Homomorphisms} \label{homomorphisms}
A surjective homomorphism is called an {\it epimorphism}.

A homomorphism from a structure into itself is called an {\it endomorphism}.
If $G$ is an abelian group, then endomorphisms of $G$ form a ring under usual addition and composition (see \cite[\S 5]{Kar.Mer:79}).
In particular, if $x$ (not necessarily in $G$) is such that conjugation by $x$ is an automorphism of $G$, and if $P(X)=\sum_{i \in I}\alpha_i X^i$ is a polynomial over $\ZZ$ where $I$ is a finite subset of $\ZZ$, then for any element $g \in G$, we write
\[
g^{P(x)}=\sum_{i \in I} \alpha_i g^{x^i}
\]
if $G$ is written additively, and similarly
\[
g^{P(x)}=\prod_{i \in I} (g^{x^i})^{\alpha_i}
\]
if $G$ is written multiplicatively.

If $G$ is a group and $x \in G$, then the map defined by $g \mapsto x^{-1}gx$ is an automorphism of $G$ called {\it conjugation} by $x$.
We denote $x^{-1}gx$ by $g^x$, or $\mathtt{Conj}(g,x)$.

\subsection{Presentations} \label{presentations}
\emph{Presentations} are a way of describing groups.
A group $G$ has a presentation
\[
P = \langle~X~|~R~\rangle
\]
where $X$ is the set of generators and $R$ is the set of relators written in terms of the generators in $X$, iff
\[
G \cong F(X)/N
\]
where $F(X)$ is the free group generated by $X$ and $N$ is the least normal subgroup containing $R$.
$G$ is said to be \emph{finitely presented} (\emph{f.p.}) if $X,R$ can be chosen to be finite.

\subsection{Metabelian groups} \label{metabelian}
For any groups $G,A,C$, one says $G = A \rtimes C$ ($G$ is a {\it semidirect product} of $A$ and $C$) if
\begin{center}
$AC=G$,  $A \triangleleft G$, and $A\cap C= \{1\}$.
\end{center}
In particular, $G$ is said to be {\it metabelian} if $G = A \rtimes C$ for some abelian groups $A, C$. This is equivalent to saying that $G'$ is abelian.

\begin{lemma}[Nies \cite{Nies:03}] \label{metabelian commutators}
Let $G = A \rtimes C$ where $A$ and $C$ are abelian.
Then, the commutator subgroup of $G$ is $G' = [A, C]$.
Moreover, if $C$ is generated by a single element $d$, then $G' = \{[u,d]~|~u \in A \}$.
In particular, $G'$ coincides with the set of commutators.
\end{lemma}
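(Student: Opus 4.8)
The plan is to prove the three assertions in turn, using throughout the paper's conventions $g^{x}=x^{-1}gx$ and $[g,h]=g^{-1}h^{-1}gh=g^{-1}g^{x}$, and the preliminary remark that for $a\in A$ and $c\in C$ one has $[a,c]=a^{-1}a^{c}\in A$ because $A\triangleleft G$. For the identity $G'=[A,C]$, write $N:=[A,C]$ for the subgroup generated by the commutators $[a,c]$ ($a\in A$, $c\in C$); the inclusion $N\le G'$ is trivial, so the work is to show $G'\le N$. First I would check that $N\triangleleft G$: since $N\le A$ and $A$ is abelian, $A$ centralizes (hence normalizes) $N$; and for $c'\in C$ we have $[a,c]^{c'}=[a^{c'},c^{c'}]=[a^{c'},c]$ because $C$ is abelian, with $a^{c'}\in A$, so $C$ normalizes $N$; as $G=AC$ this gives $N\triangleleft G$. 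In $G/N$ the images of $A$ and of $C$ commute elementwise (that is precisely what quotienting by $N$ achieves), each image is abelian, and together they generate $G/N$; hence any two elements $\overline{a_{1}c_{1}}$ and $\overline{a_{2}c_{2}}$ of $G/N$ commute, so $G/N$ is abelian and $G'\le N$.

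For the ``moreover'' clause, assume $C=\langle d\rangle$ and put $S:=\{[u,d] : u\in A\}$. Write $A$ additively and let $\psi$ be conjugation by $d$, an automorphism of $A$ (since $d$ normalizes $A$); then $u\mapsto[u,d]=u^{-1}u^{d}$ becomes $u\mapsto\psi(u)-u=(\psi-1)(u)$, which is an endomorphism of $A$ precisely because $A$ is abelian, so $S$ is a subgroup of $A$, and evidently $S\subseteq[A,C]=G'$. Conversely, $[A,C]$ is generated by the elements $[a,d^{k}]=(\psi^{k}-1)(a)$ for $k\in\ZZ$; in the endomorphism ring of $A$ (a ring since $A$ is abelian), $\psi-1$ commutes with any integer combination of powers of $\psi$, and one has the factorisation $\psi^{k}-1=(\psi-1)\,q_{k}(\psi)$ with $q_{k}(\psi)=1+\psi+\cdots+\psi^{k-1}$ for $k\ge 1$, with $q_{0}=0$, and with $q_{k}(\psi)=-(\psi^{-1}+\psi^{-2}+\cdots+\psi^{k})$ for $k\le -1$. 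Hence $[a,d^{k}]=(\psi-1)\big(q_{k}(\psi)(a)\big)=[\,q_{k}(\psi)(a),\,d\,]\in S$ (note $q_{k}(\psi)(a)\in A$ since $\psi$ is invertible on $A$), so $[A,C]\subseteq S$ and therefore $G'=S=\{[u,d] : u\in A\}$.

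The final assertion is then immediate: every element of $S$ is literally a commutator $[u,d]$ of two elements of $G$, so $G'=S\subseteq\{[g,h] : g,h\in G\}\subseteq G'$, whence $G'$ coincides with the set of commutators. I expect the only genuinely delicate point to be the second step: one must keep in mind that $[A,C]$ denotes the \emph{generated} subgroup, so the content of the claim is that the single commutators $[u,d]$ already form a subgroup that absorbs every $[a,d^{k}]$; the endomorphism/geometric-series device is exactly what supplies this, and the case $k<0$ (relevant when $C\cong\ZZ$) uses the extra factor involving $\psi^{-1}$, which is legitimate only because $\psi$ is an automorphism. Everything else is routine bookkeeping with the semidirect-product identities and the chosen commutator conventions.
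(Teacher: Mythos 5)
Your proof is correct, and it splits naturally into two halves of different character. For the identity $G'=[A,C]$ you take a structural route: verify $[A,C]\triangleleft G$ (using $[a,c]^{c'}=[a^{c'},c^{c'}]=[a^{c'},c]$ and the fact that $[A,C]\le A$ is centralized by the abelian $A$) and then observe that $G/[A,C]$ is generated by two commuting abelian images, hence abelian, so $G'\le[A,C]$. The paper instead gets this in one line from the commutator rule $[ab,c]=[a,c]^b[b,c]$: for $u,v\in A$, $x,y\in C$ it computes $[ux,vy]=[u^x,y][x,v^y]$, so every commutator of $G$ is visibly a product of two generators of $[A,C]$, with no normality check or quotient needed; your version costs a little more bookkeeping but is the more standard "kill the commutators and pass to the quotient" argument and generalizes without change. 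For the "moreover" clause your argument is essentially the paper's, recast in the endomorphism ring of $A$ (which the paper's Section 2.1 sets up precisely for this purpose): $S=\{[u,d]\,|\,u\in A\}$ being a subgroup is the paper's $[u,d][v,d]=[u+v,d]$, i.e.\ additivity of $\psi-1$; your factorization $\psi^{k}-1=(\psi-1)q_k(\psi)$ for $k\ge 1$ is the paper's $[u,d^{n+1}]=[u^{d^n}+\cdots+u^d+u,d]$; and your negative-exponent case (where, as you rightly note, invertibility of $\psi$ is what legitimizes $q_k(\psi)$) is the paper's identity $[u,x^{-1}]=[-u^{x^{-1}},x]$. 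The concluding statement that $G'$ is exactly the set of commutators is handled identically in both. No gaps.
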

\begin{proof}
Let $u,v \in A$ and $x,y \in C$.
Then, $[ux,vy] = [u^x, y][x, v^y]$ using the commutator rule ${[ab,c] = [a,c]^b[b,c]}$ (\cite[3.2.(3)]{Kar.Mer:79}) and since $A$ and $C$ are abelian.
Since both $[u^x, y]$ and $[x, v^y]$ are in $[A,C]$, $G' = [A, C]$.

Now suppose $C = \langle d \rangle$.
We use additive notation in $A$.
Then, clearly $S = {\{[u,d] \ | \ u \in A \}}$ is a subset of $G' = [A, \langle d \rangle]$.
Since $[u,d][v,d]=[u+v,d]$ and $[u,d]^{-1}=[-u,d]$, $S$ forms a subgroup of $G$.
Also, since $[u,x^{-1}]=[-u^{x^{-1}},x]$ and ${[u, d^{n+1}]=[u^{d^n}+ \ldots +u^d+u,d]}$ for all $x \in C$, $S$~contains all commutators.
Hence $G'=S$.
\end{proof}

\subsection{Finitely generated groups} \label{f.g.}
One says a group $G$ is {\it finitely generated} ({\it f.g.}) if $G$ is generated by some finite subset.

\begin{lemma} \label{f.g. factor}
Let $G$ be a f.g.\ group and let $N$ be a normal subgroup of $G$.
Then the factor group $G/N$ is also f.g.
\end{lemma}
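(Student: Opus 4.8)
The plan is to push a finite generating set of $G$ forward along the canonical projection. First I would consider the map $\pi \colon G \to G/N$ given by $\pi(g) = gN$; since $N \triangleleft G$, this is a well-defined epimorphism (this is the only place normality of $N$ is used). Since $G$ is f.g., fix a finite generating set $\{g_1, \dots, g_n\}$ of $G$. The claim is then that $\{g_1 N, \dots, g_n N\}$ generates $G/N$, which is a finite set, so $G/N$ is f.g.

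To verify the claim, I would take an arbitrary coset $gN \in G/N$ and write $g$ as a word in the generators, say $g = g_{i_1}^{\varepsilon_1} \cdots g_{i_k}^{\varepsilon_k}$ with each $\varepsilon_j \in \{1, -1\}$. Applying the homomorphism $\pi$ and using that homomorphisms respect products and inverses gives $gN = (g_{i_1}N)^{\varepsilon_1} \cdots (g_{i_k}N)^{\varepsilon_k}$, exhibiting $gN$ as a word in the elements $g_1 N, \dots, g_n N$. Hence $G/N = \langle g_1 N, \dots, g_n N \rangle$.

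There is no genuine obstacle here: the statement is the special case, applied to $\pi$, of the general fact that a surjective homomorphic image of a f.g.\ group is f.g., and if it is convenient later I would state and prove it at that level of generality instead. The only points needing (routine) care are that $\pi$ is well defined and a homomorphism, and that a word in the $g_i$ maps to the corresponding word in the $g_i N$.
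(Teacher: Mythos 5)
Your proof is correct and follows essentially the same route as the paper: take the images $g_1N,\dots,g_nN$ of a finite generating set of $G$ under the canonical projection and observe that they generate $G/N$ (the paper states this in one line; you supply the routine verification that a word in the $g_i$ maps to the corresponding word in the $g_iN$).
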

\begin{proof}
Let $S = \{g_1, \ldots, g_n\}$ be a finite generating set of $G$.
Then, $N$ is generated by $SN = \{g_i N \ | \ 1 \le i \le n \}$, which is clearly finite.
\end{proof}

\begin{lemma}[Kargapolov and Merzljakov \cite{Kar.Mer:79}] \label{f.g. abelian}
Each f.g.\ abelian group is a finite direct sum of cyclic groups.
\end{lemma}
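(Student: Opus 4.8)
The plan is to reduce the statement to a fact about free abelian groups and then apply the Smith normal form. First I would observe that if $G$ is generated by $g_1,\dots,g_n$, then sending the $i$-th standard generator of $\ZZ^n$ to $g_i$ defines an epimorphism $\varphi\colon\ZZ^n\to G$ (this is where freeness of $\ZZ^n$ is used), so that $G\cong\ZZ^n/H$ with $H=\ker\varphi$. Next I would show, by induction on $n$, that every subgroup $H\le\ZZ^n$ is itself free abelian of rank at most $n$: projecting $\ZZ^n$ onto its last coordinate carries $H$ onto a subgroup $d\ZZ\le\ZZ$; the kernel of this projection restricted to $H$ is $H\cap\ZZ^{n-1}$ (the elements of $H$ with vanishing last coordinate), which is free of rank $\le n-1$ by the inductive hypothesis; if $d=0$ then $H=H\cap\ZZ^{n-1}$, and otherwise picking $h\in H$ with last coordinate $d$ gives $H=(H\cap\ZZ^{n-1})\oplus\ZZ h$. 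In particular $H$ has a well-defined rank $m\le n$.

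The technical heart of the argument is the existence of aligned bases: there should be a basis $e_1,\dots,e_n$ of $\ZZ^n$ and positive integers $d_1\mid d_2\mid\dots\mid d_m$ such that $d_1 e_1,\dots,d_m e_m$ is a basis of $H$. To prove this I would fix any basis of $H$, express its members in the standard basis of $\ZZ^n$ to obtain an $n\times m$ integer matrix, and reduce that matrix to Smith normal form using elementary row and column operations over $\ZZ$ — row operations amounting to a change of basis of $\ZZ^n$, column operations to a change of basis of $H$. The divisibility chain $d_1\mid\dots\mid d_m$ is obtained by always first extracting the gcd of all the matrix entries still in play.

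Granted the aligned bases, the conclusion is immediate: $G\cong\ZZ^n/H\cong\bigoplus_{i=1}^{m}\ZZ/d_i\ZZ\oplus\ZZ^{n-m}$, and after discarding the summands with $d_i=1$ (which are trivial) this displays $G$ as a finite direct sum of cyclic groups — namely the groups $\ZZ/d_i\ZZ$ with $d_i\ge 2$ together with $n-m$ copies of the infinite cyclic group $\ZZ$.

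The main obstacle is the middle step, i.e.\ the reduction to Smith normal form. It requires a somewhat delicate induction — typically on $n$ together with the least absolute value of a nonzero entry of the matrix — and one must verify both that the process terminates and that the divisibility relations $d_i\mid d_{i+1}$ are preserved throughout. An alternative that avoids matrices is a direct induction on the number of generators of $G$: among all $n$-element generating sets one picks one minimizing the smallest positive coefficient that occurs in a nontrivial relation, shows this coefficient must divide every other coefficient in that relation, splits off the corresponding cyclic direct summand, and recurses on a group with one fewer generator; but this bookkeeping is essentially the same difficulty rephrased.
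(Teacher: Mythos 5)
The paper does not actually prove this lemma: it is quoted directly from Kargapolov and Merzljakov \cite{Kar.Mer:79} and used as a black box, so there is no internal proof to compare yours against. Your argument is the standard proof of the structure theorem for finitely generated abelian groups and is correct in outline: presenting $G$ as $\ZZ^n/H$, showing by induction that every subgroup $H\le\ZZ^n$ is free of rank $m\le n$, producing aligned (stacked) bases via Smith normal form, and reading off $G\cong\bigoplus_i \ZZ/d_i\ZZ\oplus\ZZ^{n-m}$. Two small remarks. First, for the statement actually needed here --- a \emph{finite direct sum of cyclic groups}, with no claim about invariant factors --- the divisibility chain $d_1\mid\cdots\mid d_m$ is superfluous: any diagonalization of the relation matrix suffices, so the ``delicate'' part you flag (maintaining divisibility while extracting gcds) can simply be dropped, and the induction reduces to the easier fact that elementary row and column operations over $\ZZ$ can clear the first row and column after moving a nonzero entry of least absolute value to the corner (termination by descent on that absolute value). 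Second, in your splitting $H=(H\cap\ZZ^{n-1})\oplus\ZZ h$ you should note explicitly that every element of $H$ has last coordinate divisible by $d$, so subtracting the appropriate multiple of $h$ lands in $H\cap\ZZ^{n-1}$; this is the one-line verification that the sum is all of $H$ and is direct. With those points made, the proof is complete and is exactly the kind of argument the cited textbook supplies.
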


\begin{lemma} \label{prod conj}
If a group $G$ is generated by a subset $X \cup Y$ of $G$, then each element $g \in G$ can be written as a product
\begin{center}
$g = w \cdot \prod \mathtt{Conj}(x_i, y_i)$ \ where $x_i \in X$, \ $w, y_i \in \langle Y \rangle$.
\end{center}
In particular, if $A$ is a normal subgroup of $G$ such that $X \subseteq A$ and $\langle Y \rangle \cap A$ is trivial, then each element $u \in A$ can be written as 
\begin{center}
$u = \prod \mathtt{Conj}(x_i, y_i)$ \ where $x_i \in X$, \ $y_i \in \langle Y \rangle$.
\end{center}
\end{lemma}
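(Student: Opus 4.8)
The plan is to induct on the length of $g$ written as a word in the elements of $X\cup Y$ and their inverses. The only algebraic input is the identity $xy=y\cdot\mathtt{Conj}(x,y)$ (equivalently $\mathtt{Conj}(x,y)=y^{-1}xy$), which lets one push a generator coming from $X$ to the right of any word in $\langle Y\rangle$ at the cost of conjugating it by that word. Before starting the induction I would make one harmless reduction: since $\langle X\cup Y\rangle=\langle(X\cup X^{-1})\cup Y\rangle$ — and since in the ``in particular'' clause $X\subseteq A$ already forces $X\cup X^{-1}\subseteq A$, while the hypothesis on $\langle Y\rangle\cap A$ does not mention $X$ — I may assume that $X$ is closed under inverses. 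This is what will guarantee that the conjugated letters appear as $\mathtt{Conj}(x_i,y_i)$ with $x_i\in X$, rather than merely $x_i\in X^{-1}$.

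For the induction, write $g=z_1z_2\cdots z_n$ with each $z_j\in X\cup Y\cup Y^{-1}$ (using that $X$ is now symmetric). The case $n=0$ gives $g=1$, and we take $w=1$ with the empty product. For the inductive step, apply the hypothesis to $z_2\cdots z_n=w'\cdot\prod_i\mathtt{Conj}(x_i,y_i)$, where $w',y_i\in\langle Y\rangle$ and $x_i\in X$. If $z_1\in Y\cup Y^{-1}$, absorb it into the left factor: $g=(z_1w')\cdot\prod_i\mathtt{Conj}(x_i,y_i)$, which has the required shape since $z_1w'\in\langle Y\rangle$. If $z_1=x\in X$, then $g=x\,w'\cdot\prod_i\mathtt{Conj}(x_i,y_i)=w'\cdot\mathtt{Conj}(x,w')\cdot\prod_i\mathtt{Conj}(x_i,y_i)$; the new conjugator is $w'\in\langle Y\rangle$, so the shape is again preserved. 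This proves the first assertion.

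For the ``in particular'' statement, take $u\in A$ and write $u=w\cdot\prod_i\mathtt{Conj}(x_i,y_i)$ via the first part. Since $A\triangleleft G$ and each $x_i\in X\subseteq A$, every factor $\mathtt{Conj}(x_i,y_i)=y_i^{-1}x_iy_i$ lies in $A$, hence so does the whole product; therefore $w=u\cdot\big(\prod_i\mathtt{Conj}(x_i,y_i)\big)^{-1}\in A$. But $w\in\langle Y\rangle$ as well, so $w\in\langle Y\rangle\cap A=\{1\}$, i.e.\ $w=1$ and $u=\prod_i\mathtt{Conj}(x_i,y_i)$.

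There is no deep obstacle here; the whole thing is one clean induction. The two points that need a little care are the inverse-closure reduction (without it the base letters of the conjugators would only be forced into $X\cup X^{-1}$) and checking that the induction keeps every conjugating element inside $\langle Y\rangle$ — both of which are taken care of by the bookkeeping above.
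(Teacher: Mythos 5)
Your proof is correct and takes essentially the paper's approach: the paper writes $g$ as an alternating product $\prod_j v_j u_j$ with $v_j\in\langle Y\rangle$, $u_j\in\langle X\rangle$ and inducts on the number of blocks, which is your letter-by-letter induction at coarser granularity, both resting on the same identity $xw'=w'\cdot\mathtt{Conj}(x,w')$. Your explicit reduction to $X$ closed under inverses is a point the paper leaves implicit (its sketch really yields conjugates of elements of $\langle X\rangle$, i.e.\ of $X\cup X^{-1}$, and its applications arrange inverse-closure by hand), so flagging and justifying that step is a sensible clarification rather than a different route.
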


\begin{proof}
Let $g \in G$.
Then since $X \cup Y$ generates $G$, it can be written as a product
\[
g = \prod_{j=1}^n v_j u_j
\]
for some positive integer $n$, where $u_j \in \langle X \rangle$, $v_j \in \langle Y \rangle$ for each $j$.
The lemma can be proven by induction on $n$.
\end{proof}

\subsection{Rings} \label{rings}
In this paper, we mean by a \textquotedblleft ring\textquotedblright \ an associative ring with the multiplicative identity $1 \neq 0$.

An ideal of a ring is said to be {\it principal} if it is generated by a single element.
One says a ring $\mathcal{R}$ is {\it principal} if every ideal of $\mathcal{R}$ is principal.

Let $\mathcal{R}$ be a ring.
A non-zero element $u \in \mathcal{R}$ is called a {\it zero-divisor} if there exists a non-zero element $v \in \mathcal{R}$ such that $uv=0$ or $vu=0$.
$\mathcal{R}$ is said to be {\it entire} if it is commutative and does not contain any zero-divisors.

Let $\mathcal{R}$ be a commutative ring and let $S$ be a subset of $\mathcal{R}$ containing $1$, closed under multiplication.
Define a relation $\sim$ on the set ${\{(a,s) \ | \ a \in \mathcal{R}, \ s \in S\}}$ by
\begin{center}
$(a,s) \sim (a',s')$ iff there exists $s_1 \in S$ such that $s_1(s'a-sa')=0$,
\end{center}
then clearly $\sim$ is an equivalence relation.
Let $S^{-1}\mathcal{R}$ be the set of equivalence classes and let $a/s$ denote the equivalence class containing $(a,s)$. 
Then $S^{-1}\mathcal{R}$ forms a ring under the following operations;
\begin{itemize}
\item addition is defined by $a/s + a'/s' = (s'a+sa')/ss'$
\item multiplication is defined by $(a/s) \cdot (a'/s') = aa'/ss'$
\end{itemize}
This ring is called the {\it ring of fractions} of $\mathcal{R}$ by $S$.
For the proofs that these operations are well-defined and $S^{-1}\mathcal{R}$ forms a ring, see \cite[\S 3]{Lang:84}.

\subsection{Modules} \label{modules}
Let $\mathcal{R}$ be a ring.
A {\it module} over $\mathcal{R}$, or an {\it $\mathcal{R}$-module} $M$, written additively, is an abelian group with multiplication by elements in $\mathcal{R}$ defined in such a way that for any $a,b \in \mathcal{R}$ and for any $x,y \in M$,
\begin{itemize}
\item $(a+b)x=ax+bx$ \ and
\item $a(x+y)=ax+ay$.
\end{itemize}
Modules are generalization of abelian groups in the sense that every abelian group is a $\ZZ$-module.

A {\it generating set} $S$ of an $\mathcal{R}$-module $M$ is a subset of $M$ such that every element of $M$ can be written as a sum of terms in the form $a_i s_i$ where $a_i \in \mathcal{R}$, $s_i \in S$.
A module is said to be {\it finitely generated} if it possesses a finite generating set.

One says an $\mathcal{R}$-module $M$ is {\it torsion-free} if for any $a \in \mathcal{R}$, $x \in M$, $ax=0$ implies $a=0$ or $x=0$.

An $\mathcal{R}$-module $M$ is {\it free} if it is isomorphic to $\bigoplus_{i \in I} R_i$ for some finite index $I$, where each $R_i$ is isomorphic to $\mathcal{R}$ seen as a module over itself in the natural way.

\section{Quasi-finitely axiomatizable groups} \label{QFA}

\begin{definition} \label{QFA def}
An infinite f.g.\ group $H$ is \emph{quasi-finitely axiomatizable (QFA)} if there exists a first-order sentence $\psi$ such that $H \models \psi$ and if $G$ is a f.g.\ group and $G \models \psi$, then $G \cong H$.
\end{definition}

The idea of quasi-finite axiomatizability was introduced by Nies in \cite{Nies:03}.
It was originally used to determine the expressiveness of first-order logic in group theory.
Later, it turned out to be interesting even from an algebraic point of view.
For example, Oger and Sabbagh \cite{Oger.Sabbagh:06} showed that if $G$ is a f.g.~nilpotent group, then $G$ is QFA iff each element $z \in Z(G)$ satisfies $z^n \in G'$ for some positive integer $n$.

In each of the proofs below, we give the sentence $\psi$, suppose a f.g.~group $G$ satisfies $\psi$ and show that $G$ must be isomorphic to $H$.
Since we ``do not know'' whether $G \cong H$ holds until the end of the proof, we want to talk about $G$ and $H$ separately.
So we refer to the group $H$ as the {\it standard case}, as opposed to the {\it general case} $G$.

\subsection{Finitely presented groups} \label{BS}
Our first example is Baumslag-Solitar groups, which are finitely presented (see below for presentations).
They are relatively easy to describe in first-order because the whole presentation can be a part of the sentence.
Although most of the QFA groups we give in this paper are described as semidirect products, this is the only case where we can define the action in first-order.

A Baumslag-Solitar group is a group with a presentation of the form
\[
\langle \ a,d \ | \ d^{-1}a^n d= a^m \rangle
\]
for some integers $m,n$.
We show that each Baumslag-Solitar group is QFA for the cases where $m \ge 2$ and $n=1$.
For each $m \ge 2$, define
\[
H_m = \langle~a,d \ | \ d^{-1}ad = a^m \rangle.
\]
Then $H_m$ is the semidirect product of $A = \ZZ[1/m] =  \{z m^{-i} \ |\ z \in \ZZ, i \in \NN \}$ by  $C = \langle d \rangle$, where the action of $d$ on $A$ is given by $d^{-1}ud = um$ for $u \in A$.
 
\begin{theorem}[Nies \cite{Nies:07}] \label{BS thm}
$H_m$ is QFA for each integer $m \ge 2$.
\end{theorem}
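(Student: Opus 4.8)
The plan is the usual one for QFA results: produce a first-order sentence $\psi$ with $H_m\models\psi$, and show every finitely generated $G\models\psi$ satisfies $G\cong H_m$. Since $H_m=A\rtimes\langle d\rangle$ with $A=\ZZ[1/m]$ and the action of $d$ completely recorded by the relation $d^{-1}ad=a^m$, the sentence $\psi$ will assert the existence of $a\ne 1$ and $d$ such that: (i) $d^{-1}ad=a^m$; (ii) $A:=C_G(a)$ is abelian and $G'\subseteq A$ — via $g^{-1}xg=x[x,g]$ this forces $A\triangleleft G$ and $G$ metabelian, and by Lemma~\ref{metabelian commutators} it makes $G'$ the set of commutators in the standard case; (iii) $C:=C_G(d)$ is abelian, $A\cap C=\{1\}$ and $G=AC$, so that $G=A\rtimes C$ with $d\in C$; (iv) the squaring map $g\mapsto g^2$ is injective but not surjective — an injective non-surjective self-map forces an infinite domain, and in $H_m$ this holds since $(u\,d^k)^2=u\cdot u^{d^{-k}}\cdot d^{2k}$ is injective ($\ZZ[1/m]$ being torsion-free) yet misses every element whose $d$-exponent is odd; and (v) a finite package of arithmetic clauses inside the definable subgroups $A$ and $G'$ (for instance an index condition such as $[A:G']=m-1$, together with conditions controlling the chain of $m$-th roots $a,\ a^{d^{-1}},\ a^{d^{-2}},\dots$) whose role is to cut $C$ down to exactly $\langle d\rangle$. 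The feature flagged in the text — that here the action of $d$ is first-order definable, through (i) — is precisely what lets a presentation of $H_m$ sit inside $\psi$.

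Checking $H_m\models\psi$ would be routine: any $u\,d^k\in H_m$ centralising $a$ satisfies $a^{m^k}=a$, forcing $k=0$, so $C_{H_m}(a)=\ZZ[1/m]$ and likewise $C_{H_m}(d)=\langle d\rangle$, and the remaining clauses just restate the semidirect decomposition and $\ZZ[1/m]=\ZZ[d^{\pm1}]\cdot a$ with $d$ acting as multiplication by $m$. For the general case, (ii),(iii) give $G=A\rtimes C$ with $A,C$ abelian, $A=C_G(a)\triangleleft G$, $C=C_G(d)\ni d$, and (iv) makes $G$ infinite. Now I would invoke finite generation: writing generators of $G$ as $u_ic_i$ with $u_i\in A$, $c_i\in C$, their images generate $G/A\cong C$, so $C=\langle c_1,\dots,c_r\rangle$, and Lemma~\ref{prod conj} (with $X=\{u_i\}$, $Y=\{c_i\}$, noting $\langle Y\rangle\cap A=C\cap A=\{1\}$) shows every element of $A$ is a product of $C$-conjugates of the $u_i$; hence $A$ is finitely generated as a module over $\ZZ[C]$, a finitely generated — therefore Noetherian — commutative ring. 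Once (v) forces $C\cong\ZZ$ (so $\ZZ[C]\cong\ZZ[t^{\pm1}]$) and $A=\ZZ[t^{\pm1}]\cdot a$, the relation $(t-m)\cdot a=0$ from (i) gives $A\cong\ZZ[t^{\pm1}]/J$ for some ideal $J\supseteq(t-m)$; since $\ZZ[t^{\pm1}]/(t-m)\cong\ZZ[1/m]$, whose proper quotients are the finite cyclic $\ZZ/n$ with $\gcd(n,m)=1$, and since $A$ is infinite (were $A$ finite, some $d^{j}$ with $j\ge1$ would centralise $a$, hence lie in $A\cap C=\{1\}$, contradicting $C\cong\ZZ$), we get $J=(t-m)$, i.e.\ $A\cong\ZZ[1/m]$ with $d$ acting as $\times m$; thus $G\cong H_m$.

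The main obstacle is clause (v). Cutting $C$ down to $\langle d\rangle$ — equivalently, forcing $G=\langle a,d\rangle$, or that $A$ is generated by $a$ over $\ZZ[C]$ — is not literally first-order, since neither ``$x\in\langle y\rangle$'' nor ``this subgroup is cyclic'' is; and the clauses (i)--(iv) alone are satisfied by genuinely different finitely generated groups, such as $\ZZ[1/m,1/p]\rtimes\ZZ$ for a new prime $p$, or $\ZZ[1/m]\rtimes\ZZ^2$. The way around this has to combine the Noetherian-module analysis above (which already prevents $A$ from being ``too large'' once $C$ is cyclic) with a carefully chosen finite list of first-order arithmetic conditions inside the definable subgroups $A$ and $G'$ — $G'$ being accessible as the set of commutators by Lemma~\ref{metabelian commutators} — the delicate point being to pick conditions that both hold in $H_m$ and, together with finite generation, eliminate every competing model. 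Designing and verifying that list is the heart of the proof.
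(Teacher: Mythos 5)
Your proposal stalls exactly where you say it does, and that is a genuine gap rather than a routine loose end: clause (v) is the content of the theorem. Clauses (i)--(iv) are indeed satisfiable by finitely generated groups not isomorphic to $H_m$ (besides your examples, take $A=\ZZ[1/m]\oplus\ZZ[1/m]$ with $d$ acting as multiplication by $m$ on both summands: there $A=C_G(a)$ for $a=(1,0)$, $C_G(d)=\langle d\rangle$, $G=A\rtimes C$, squaring is injective and not surjective, yet $G\not\cong H_m$), so everything hinges on an explicit finite list of first-order conditions, verified to hold in $H_m$, that together with finite generation forces $A\cong\ZZ[1/m]$ with the correct action and $C=\langle d\rangle$. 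You do not produce that list, nor the verification; "an index condition such as $[A:G']=m-1$ together with conditions controlling the chain of $m$-th roots" is a guess, not an argument. A second structural weakness feeds into this: your clause (i) records the action of $d$ only on the single element $a$, which is why your endgame needs the additional (non-first-order, and nowhere justified) statement that $A$ is the cyclic $\ZZ[C]$-module generated by $a$.

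For comparison, the paper's sentence quantifies only over $d$, defines $A$ as $\{g\,:\,g^{m-1}\in G'\}$ (with $G'$ definable because the commutators form a subgroup), and asserts the action globally: $\forall u\in A\;[d^{-1}ud=u^m]$. The "clause (v)" you are missing consists of: $|C:C^2|=2$ (so $C$ has torsion-free rank $1$); for a fixed prime $q\nmid m$, $|A:A^q|=q$ and injectivity of $u\mapsto u^q$ on $A$ (used first to show $d$ has infinite order, and later to force rank $1$); fixed-point-freeness of the action of $C-\{1\}$ on $A-\{1\}$ and the no-inversion clause $u^x\neq u^{-1}$; and the clause that $d$ is not a proper $i$-th power for $1<i\le m$. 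The structural engine is then close to your Noetherian idea but sharper: $A$ is finitely generated and torsion-free as a module over the principal entire ring $\ZZ[1/m]$, hence free, and $|A:A^q|=q$ forces the rank to be exactly $1$ --- which is how the paper avoids ever needing $A$ to be generated by $a$. Finally, $\mathrm{Aut}(\ZZ[1/m])\cong\ZZ\times\ZZ_2$ together with the no-inversion clause kills the torsion part of $C$, and the "no proper $i$-th root of $d$" clause (plus the fact that multiplication by $m$ is not an $i$-th power in $\mathrm{Aut}(\ZZ[1/m])$ for $i>m$) yields $C=\langle d\rangle$. Until you supply and verify conditions doing this work, what you have is a correct plan for the easy half and an acknowledged hole where the proof should be.
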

\begin{proof}
We prove the theorem by actually giving a sentence $\exists d \ \phi_m(d)$ describing the group.
We list sufficiently many first-order properties (P0)-(P9) of $H_m$ and its element $d$ so that whenever a f.g.\ group $G$ has an element $d$ satisfying the conjunction $\phi_m(d) \equiv {{\rm[(P0)} \wedge \ldots \wedge {\rm(P9)]}}$, we must have $G \cong H_m$. That is, \ $G = A \rtimes C$ where each of $A,C$ is isomorphic to that of the standard case, and $C$ is generated by $d$.

The first two are properties of $d$.
\begin{itemize}
\item[(P0)] $d \neq 1$
\item[(P1)] $\fa g \, [g^i \neq d]$, for each $i$, $1 < i \le m$. 
\end{itemize}

The next five formulas define the subgroups $G', A, C$ and describe some of their properties.
Fix a prime $q$ that does not divide $m$.
\begin{itemize}
\item[(P2)] The commutators form a subgroup (so that $G'$ is definable)
\item[(P3)] $A=\{g \ | \ g^{m-1} \in G'\}$ and $C=C(d)$ are abelian, and $G = A \rtimes C$
\item[(P4)] $|C:C^2|=2$
\item[(P5)] $|A:A^q|=q$
\item[(P6)] The map $u \mapsto u^q$ is 1-1 in $A$.
\end{itemize}
We know (P2) holds in $H_m$ from Lemma \ref{metabelian commutators}.
(P4) can be expressed as \textquotedblleft there is an element which is not the square of any element, and for any three elements $x_1, x_2, x_3$, the formula $\exists y \ [x_i = x_j y^2]$ is satisfied for some $1 \le i < j \le 3$\textquotedblright, and similar for (P5).
 
We show that (P3) actually defines $A$ in the standard case.
If $g \in \ZZ[1/m]$, then $[g,d]=g^{-1}d^{-1}gd=g^{-1}g^{m}=g^{m-1}$, so $g^{m-1} \in H'_m$.
Conversely, since $\ZZ[1/m]$ is closed under taking roots (because  $H_m/\ZZ[1/m]$ is torsion-free), $g \notin \ZZ[1/m]$ implies $g^{m-1} \notin \ZZ[1/m]$.
Since  $H_m/\ZZ[1/m]$ is abelian, $H'_m \le \ZZ[1/m]$ and so $g^{m-1} \notin H'_m$.

The last three describe how $C$ acts on $A$.
\begin{itemize}
\item[(P7)] $\fa u \in A \ [ d^{-1}u d= u^m]$
\item[(P8)] $u^x \in A-\{1,u\}$ for $u \in A- \{1\}$, $x \in C -\{1\}$
\item[(P9)] $u^x \neq u^{-1}$ for $u \in A-\{1\}$, $x \in C$
\end{itemize}
(P8) says that $C-\{1\}$ acts on $A-\{1\}$ without fixed points, and (P9) says that the orbit of $u \in A$ under $C$ does not contain $u^{-1}$, unless $u=1$.

Now let $G$ be a f.g.\ group and suppose $d \in G$ satisfies (P0)-(P9).
First, we show that the order of $d$ is infinite.
If $d^r = 1$ for some $r > 0$, then for each $u \in A$ we have $u = d^{-r}ud^r = u^{mr}$.
So $A$ is a periodic group of some exponent $k \le mr-1$.
If $q$ divides $k$, then there exists an element $v \in A$ of order $q$.
This makes the map $u \mapsto u^q$ not 1-1, contrary to (P6).
If $q$ does not divide $k$, then the map is an automorphism of $A$ and so $A^q=A$, contrary to (P5).

Let $\mathcal{R} = \ZZ[1/m]$ viewed as a ring.
Then $A$ can be seen as an $\mathcal{R}$-module by defining $u(zm^{-i}) = \mathtt{Conj}(u^z,d^{-i}) \ ( = u^{zm^{-i}}$, so well-defined) for $z \in \ZZ$, $i \in \NN$.
Now we show that $A$ is f.g.\ and torsion-free as an $\mathcal{R}$-module.

Since $C \cong G/A$ and $G$ is f.g., $C$ is f.g.\ abelian by Lemma \ref{f.g. factor} and (P3).
So $C$ is a direct sum of cyclic groups by Lemma \ref{f.g. abelian}, and has only one infinite cyclic factor by (P4).
Since $d$ has infinite order, we can choose a generator $c \in C$ of this factor that satisfies $c^s = d$ for some $s \ge 1$.
Then, $C = \langle c \rangle \times F$ where $F = T(C)$ is the torsion subgroup of C.
Since $G = AC$, $G$~has a finite generating set of the form $B \cup \{ c \} \cup F$, where $B \subseteq A$. We may assume $B$ is closed under taking inverse, and under conjugation by elements of the set $F \cup \{ c^i \ | \ 1 \le i < s\}$.
If $u \in A$, then $u$ can be written as a product of the terms $\mathtt{Conj}(b, xc^z)$ where $x \in F$, $z \in \ZZ$, $b \in B$, by Lemma \ref{prod conj}.
Hence by the closure properties of $B$, $u$ is a product of the terms $\mathtt{Conj}(b', d^w)$ where $b' \in B$ and $w \in \ZZ$.
This shows that $A$ is f.g.\ as an $\mathcal{R}$-module.

Suppose $u(zm^{-i}) = \mathtt{Conj}(u^z, d^{-i}) = 1$ for some $u \neq 1$, $i \ge 0$, $z \neq 0$.
Then $u^z = 1$ by (P8), so conjugation by $d$ is an automorphism of the finite subgroup $\langle u \rangle$ by (P7).
Hence some power of $d$ has a fixed point, contrary to (P8).
Therefore $A$ is torsion-free as an $\mathcal{R}$-module.

Since $\mathcal{R}$ is a principal entire ring, $A$ is a free $\mathcal{R}$-module by \cite[Thm.\ XV.2.2]{Lang:84}, so that $A$ as a group is isomorphic to $\bigoplus_{1 \le i \le k} R_i$ for some positive integer $k$, where each $R_i$ is isomorphic to the additive group of $\mathcal{R}$.
But then $|A:A^q| = q^k$, so $k = 1$ by (P5).

Now we show that $F$ is trivial.
For, suppose $x \in F$, then the action of $x$ is an automorphism of $\ZZ [1/m]$ of finite order.
Note $\mathtt{Aut}(\ZZ[1/m])$ is isomorphic to $\ZZ \times \ZZ_2$ where the first factor is generated by the map $u \mapsto um$ and the second by the map $u \mapsto -u$.
So the action of $x$ is either the identity, or inversion.
But we know that $x$ cannot be inversion by (P9), so $F = \{1\}$.

Recall that $s$ is the positive integer satisfying $c^s=d$.
Then $s \le m$ because the automorphism $u \mapsto um$ is not an $i$th power in $\mathtt{Aut}(\ZZ [1/m])$ for any $i > m$.
So $i = 1$ by (P1), meaning $\langle d \rangle = C$.
Hence $G \cong H_m$.
\end{proof}
(P1) was needed in the last part because $d$ might be a proper power of $c$.
For example, if $m=4$, then $\ZZ[1/4]=\ZZ[1/2]$ and the map $u \mapsto 4u$ is clearly the square of the map $u \mapsto 2u$, which is an automorphism of $\ZZ[1/4]$.

\subsection{Non-finitely presented groups} \label{wreath}
As mentioned in the last subsection, the groups in this example are also described as semidirect products, but in this case we cannot define the action explicitly.
Instead, we use a relationship between (definable) subgroups to restrict our possibilities.

The restricted wreath product $\ZZ_p \wr \ZZ$ is the semidirect product $H_p = A \rtimes C$ where $A = \bigoplus_{z \in \ZZ} \ZZ_p^{(z)}$,
$\ZZ_p^{(z)}$ is a copy of $\ZZ_p$, $C = \langle d \rangle$ with $d$ of infinite order,
and $d$ acts on $A$ by shifting, i.e.\ $(\ZZ_p^{(z)})^d = \ZZ_p^{(z+1)}$.
It has a presentation
\begin{equation} \label{presentation 1}
\langle~a,d \ | \ a^p, [v_r, v_s] (r,s \in \ZZ, r<s) \rangle
\end{equation}
where $a$ corresponds to a generator of $\ZZ_p^{(0)}$ and $v_r = a^{d^r}$.

\begin{theorem}[Nies \cite{Nies:03}] \label{wreath thm}
$\ZZ_p \wr \ZZ$ is QFA for each prime $p$.
\end{theorem}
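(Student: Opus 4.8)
The plan is to follow the template of Theorem \ref{BS thm}. Write $H_p = \ZZ_p \wr \ZZ$. I will write down a sentence $\exists d\, \phi_p(d)$, assume that a f.g.\ group $G$ has an element $d$ with $G \models \phi_p(d)$, and deduce $G \cong H_p$. The conjunction $\phi_p(d)$ will assert: (Q0) $d \neq 1$; (Q1) the set $A := \{g : g^p = 1\}$ is an abelian normal subgroup, the centraliser $C := C_G(d)$ is abelian, and $G = A \rtimes C$; (Q2) the commutators of $G$ form a subgroup, so that $G'$ is first-order definable and $G' = [A,C] \le A$ by Lemma \ref{metabelian commutators}; (Q3) $[A : G'] = p$; (Q4) $|C : C^2| = 2$; and (Q5) $C \setminus \{1\}$ acts fixed-point-freely on $A \setminus \{1\}$, that is $u^x \neq u$ whenever $u \in A \setminus \{1\}$ and $x \in C \setminus \{1\}$. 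Each clause is first-order of the same shape as (P2)--(P9) in Theorem \ref{BS thm}; for instance $[A:G']=p$ is phrased with $p+1$ universally quantified elements of $A$, just as $|A:A^q|=q$ was. All clauses hold in $H_p$ when $d$ is a generator of the cyclic factor $C$: there $\{g : g^p = 1\} = A$ and $C_G(d) = C$; regarding $A$ as the free rank-one module over $\mathbb{F}_p[\la d\ra]$, its submodule $G' = (d-1)A$ has index $p$; $|C:C^2| = |\ZZ:2\ZZ| = 2$; and a nontrivial power of $d$ shifts the finite support of any $u \in A \setminus \{1\}$.

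So suppose $G$ is f.g.\ and $G \models \phi_p(d)$. Since $G/A \cong C$ and $G$ is f.g., Lemmas \ref{f.g. factor} and \ref{f.g. abelian} give $C = C_0 \times F$ with $C_0 \cong \ZZ^r$ and $F = T(C)$ finite. Because $A$ has exponent $p$, conjugation turns $A$ into a module over the commutative ring $\mathbb{F}_p[C]$, and $G' = [A,C] = I_C A$ where $I_C$ is the augmentation ideal; by (Q3), $|A/I_C A| = p$, so $A \neq 0$. The first main step is $F = 1$. A $p$-element $f$ of $F$ acts on the $\mathbb{F}_p$-space $A$ with $(f - 1)^p = f^p - 1 = 0$, hence has a nonzero fixed vector, contradicting (Q5); so $p \nmid |F|$ and $e := |F|^{-1}\sum_{f \in F} f$ is an idempotent of $\mathbb{F}_p[C]$ with $eA = A^F$ and $(1-e)A = I_F A$, whence $A = A^F \oplus I_F A$. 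As $I_F A \subseteq I_C A = G'$, the group $A/G'$ is a quotient of $A^F$; since $|A/G'| = p$ we get $A^F \neq 0$, and then any element of $A^F \setminus \{1\}$ is fixed by every element of $F$, so (Q5) forces $F = 1$. Consequently $|C : C^2| = 2^r$, so (Q4) gives $r = 1$ and $C = \la c \ra \cong \ZZ$.

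From here I would argue exactly as in Theorem \ref{BS thm}. Put $\mathcal{R} := \mathbb{F}_p[\la c \ra] \cong \mathbb{F}_p[t, t^{-1}]$, a principal ideal domain, and view $A$ as an $\mathcal{R}$-module. Writing a finite generating set of $G$ in the form $B \cup \{c\}$ with $B \subseteq A$ finite and closed under inverses, Lemma \ref{prod conj} shows every element of $A$ is a product of conjugates $\mathrm{Conj}(b, c^z)$ ($b \in B$, $z \in \ZZ$), so $A$ is finitely generated over $\mathcal{R}$. It is torsion-free over $\mathcal{R}$ too: a torsion element would generate a finite $c$-invariant submodule on which $c$ acts with finite order $k \ge 1$, producing a nonzero fixed point of $c^k \neq 1$ against (Q5). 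By the structure theorem over $\mathcal{R}$, $A$ is free of some rank $m$, so as a group $A \cong \bigoplus_{i \le m}\bigoplus_{z \in \ZZ} \ZZ_p$ with $c$ acting as the shift in each block; then $G' = (c-1)A$ by Lemma \ref{metabelian commutators}, so $[A:G'] = |(\mathcal{R}/(t-1))^m| = p^m$, and (Q3) forces $m = 1$. Hence $A = \bigoplus_{z \in \ZZ}\la a^{c^z}\ra$ with each summand $\cong \ZZ_p$ and $c$ shifting, i.e.\ $G = A \rtimes \la c \ra \cong H_p$.

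I expect the first-order formalisation of (Q0)--(Q5) and the module computations to be routine (they mirror Theorem \ref{BS thm}). The hard part will be disposing of the torsion subgroup $F$ of $C$: unlike the Baumslag--Solitar case there is no handy ``$|A:A^q|$'' invariant, and the key realisation is that (Q3) forces the fixed submodule $A^F$ to be nonzero, after which fixed-point-freeness (Q5) collapses $F$. I would also check that no finite group models $\phi_p(d)$ --- it cannot, since the argument above would then give $C = 1$, contradicting $d \in C \setminus \{1\}$ --- so that a model $G$ is automatically infinite, as Definition \ref{QFA def} requires.
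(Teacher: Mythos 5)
Your proposal is correct, and while its overall architecture matches the paper's (definable decomposition $G=A\rtimes C$, then viewing $A$ as a finitely generated torsion-free module over the principal entire ring $\mathbb{F}_p[c,c^{-1}]$, hence free, and forcing the rank to be $1$), it diverges from the paper at the crucial step of killing the torsion subgroup $F$ of $C$, and in the choice of axioms. The paper uses two witnesses $a,d$, takes $A=G'\oplus\la a\ra$, and includes the axiom (P5) that no element of $C-\{1\}$ has order less than $p$; torsion-freeness of $C$ is then proved by an orbit-counting argument inside the $t$-invariant subspace generated by $a$, which shows any torsion element's order divides $p-1$, contradicting (P5). You instead use a single witness $d$, define $A$ as the set of elements of exponent $p$, replace the direct-sum axiom by $[A:G']=p$, and dispose of $F$ by a Maschke-type argument: a $p$-element of $F$ acts unipotently and so has a nonzero fixed vector (killed by fixed-point-freeness), and once $p\nmid|F|$ the averaging idempotent gives $A=A^F\oplus I_FA$ with $I_FA\subseteq G'$, so $[A:G']=p$ forces $A^F\neq 0$, whose nonzero elements are fixed points of all of $F$; hence $F=1$ without any analogue of (P5). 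This buys you a leaner axiom list and a more conceptual (representation-theoretic) torsion elimination, at the cost of assuming familiarity with group rings and Maschke's idea, whereas the paper's counting argument is more elementary and self-contained. A further small dividend of your formulation: since you keep track of $A\cong\mathbb{F}_p[c,c^{-1}]$ as a module (not merely as a group), the action of $c$ is automatically the shift, so the paper's final verification that the action is ``correct'' (its last use of (P2)) is subsumed; likewise your closing remark about finite models is unnecessary, since any f.g.\ model is shown isomorphic to $\ZZ_p\wr\ZZ$ and is therefore infinite. The remaining steps you leave as routine (expressing $[A:G']=p$ with $p+1$ quantifiers, finiteness of cyclic torsion modules over $\mathbb{F}_p[c,c^{-1}]$, verification of the axioms in the standard model) are indeed routine and are handled at the same level of detail as in the paper.
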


\begin{proof}
The general idea of the proof is the same as that of the previous example.
We express the group as a semidirect product of its subgroups $A,C$, and then show that each of them is isomorphic to that in the standard case.
We also need to make sure that $C$ acts on $A$ correctly in this case, since the action of $d$ cannot be expressed in first-order.

Let $H_p = \ZZ_p \wr \ZZ$.
Then the sentence describing $H_p$ is $\exists a \exists d \ \phi_p(a,d)$ where $\phi_p(a,d) \equiv$ [(P0) $\wedge \ \ldots \ \wedge$ (P6)].
We use additive notation in $A$.

The first formula says that neither of $a,d$ is the identity.
Note both 0 and 1 refer to the identity here, since $a$ is in $A$ where we use additive notation, while $d$ is in $C$ where we use multiplicative notation.
\begin{itemize}
\item[(P0)] $a \neq 0$, $p \cdot a = 0$, $d \neq 1$.
\end{itemize}

The next five define the subgroups $G', A, C$ and describe some of their properties.
Note that $\langle a \rangle$ is first-order definable since it is finite by (P0) above.
\begin{itemize}
\item[(P1)] The commutators form a subgroup (so that $G'$ is definable)
\item[(P2)] $A = G' + \langle a \rangle = G' \oplus \langle a \rangle$ and $C = C(d)$ are abelian, and $G = A \rtimes C$
\item[(P3)] $|C:C^2|=2$
\item[(P4)] $\forall u \in A \ [p \cdot u=0]$
\item[(P5)] No element in $C-\{1\}$ has order $< p$.
\end{itemize}

The last thing we need to say is that $C-\{1\}$ acts on $A-\{0\}$ without fixed points.
\begin{itemize}
\item[(P6)] $u^x \in A-\{0, u\}$ for $u \in A-\{0\}$, $x \in C-\{1\}$.
\end{itemize}

First, we show that ${A = H'_p \oplus \langle a \rangle}$ holds in the standard case.
We know that it has the form ${H'_p = \{[u,d] \ | \ u \in A \}}$ from Lemma~\ref{metabelian commutators}, and so it is a subgroup of $A$.
Consider the group $\tilde{H}_p$ with a presentation
\begin{equation} \label{presentation 2}
\langle~\tilde{a}, \tilde{d} \ | \ {\tilde{a}}^p, [\tilde{a},\tilde{d}]~\rangle.
\end{equation}
Since for each relator in (\ref{presentation 1}), a corresponding relator is in (\ref{presentation 2}), there exists an epimorphism $\Psi: H_p \rightarrow \tilde{H}_p$ mapping $a, d$ to $\tilde{a}, \tilde{d}$ respectively.
As $\mathtt{Ker}(\Psi)$ is properly contained in $A$ and $\tilde{H}_p$ is abelian, $H'_p$ is properly contained in $A$.
Now for each $z \neq 0$, $\ZZ_p^{(z)}$ is generated by $a^{d^z}=a+[a,d^z]$, so $H'_p+\langle a \rangle=A$.
If $a^r \in H'_p$ for some $0<r<p$, then there exists $u \in A$ such that $a^r=[u,d]$ or equivalently $u+a^r=u^d$ i.e.\ $a^r$ shifts $u$, which is impossible.
Hence $H'_p \cap \langle a \rangle$ is trivial and so $A = H'_p \oplus \langle a \rangle$.

Now let $G$ be a f.g.\ group and suppose $a,d \in G$ satisfy (P0)-(P6).
We first prove that $C$ is infinite cyclic.
Since $C$ is f.g.\ abelian of torsion-free rank 1 by (P3), it suffices to show that $C$ is torsion-free by Lemma \ref{f.g. abelian}.

For, suppose $t \in C-\{1\}$ has finite order $r$.
Then every orbit in $A-\{0\}$ under the action of $t$ has size $r$, because if some orbit has size $s<r$, then $t^s \in C-\{1\}$ has a fixed point.
Let $A$ be viewed as a vector space over $\ZZ_p$ and let $U$ be the $t$-invariant subspace of $A$ generated by $a$.
Then $|U| = p^n$ for some $1 \le n \le r$, because $U = \left\{\sum_{0 \le i < r} m_i \cdot a^{t^i} \ | \ m_i \in \ZZ_p \right\}$.
But $|U-\{0\}| \ge p$ by (P5) and so $n > 1$.
Now consider the size of $G' \cap U$, which is also $t$-invariant because $G'$ is normal in~$G$.
Since $a$ is not in $G'$, $|U:G' \cap U| > 1$.
We also know that $|U:G' \cap U| \le |A:G'|=p$ \mbox{from \cite[Exercise 2.4.4]{Kar.Mer:79}}.
Hence the only possible size is $p^{n-1}$ since it must divide $|U|=p^n$.
As every orbit has size $r$ and the orbits partition each $t$-invariant subspace excluding the identity, $r$ divides $p^n-1$ and $p^{n-1}-1$.
But $(p^n-1)-p(p^{n-1}-1)=p-1$, so $r$ also divides $p-1$.
In particular, $r \le p-1$, contrary to (P5).

Choose a generator $c$ of $C$ and let $\mathcal{R}$ be the ring of fractions of $\ZZ_p[c]$ by the multiplicative subset $\{c^n \ | \ n \ge 0\}$.
Then $\mathcal{R}$ is a principal entire ring because the polynomial ring $\ZZ_p[c]$ is principal entire (see \cite[Section II.3 and Exercise 4]{Lang:84}).

Now, $A$ can be seen as an $\mathcal{R}$-module by defining $u \cdot P =  \sum_{i=r}^s\alpha_i u^{c^i}$ for $u \in A$, $P = \sum_{i=r}^s\alpha_ic^i \in \mathcal{R}$.
We show that $A$ is f.g.\ and torsion-free as an $\mathcal{R}$-module.

Let $B=\{b_1,\ldots,b_m\}$ be a finite generating set of $G$.
Then, since each $b_i \in B$ can be written in the form $b_i = u_i c^{z_i}$ where $u_i \in A$, $z_i \in \ZZ$, the set $S \cup \{c\}$ also generates $G$ where $S = \{u_1,\ldots,u_m\}$.
Hence every element $u$ in $A$ can be written as a sum of the terms $\mathtt{Conj}(u_j, c^{z_j})$ where $u_j \in S$, $z_j \in \ZZ$ by Lemma \ref{prod conj}, meaning $A$ is f.g.\ as an $\mathcal{R}$-module.

Suppose $u \cdot P=0$ for some $u \in A - \{0\}$, $P = \sum_{i=r}^s\alpha_ic^i \in \mathcal{R} - \{0\}$.
Then $P$ must consist of more than one term, for if $\alpha u^{c^z} = 0$ for some $\alpha \neq 0$, then $u^{c^z} = 0$ by~(P4), contrary to (P6).
We can assume that the leading coefficient of $P$ is $-1$, so that $u^{c^s} = \sum_{i=r}^{s-1} \alpha_i u^{c^i}$.
But then, for each $w \ge s$, $u^{c^w}$ is in the finite subspace of~$A$ generated by ${\{ u^{c^i} \ | \ r \le i \le s-1 \}}$.
\footnote{e.g.
\[
\begin{split}
u^{c^{s+1}}&={\sum_{i=r}^{s-1} \alpha_i u^{c^{i+1}}}\\
&={\alpha_{s-1} u^{c^s} +  \sum_{i=r}^{s-2} \alpha_i u^{c^{i+1}}}\\
&={\sum_{i=r}^{s-1} [\alpha_{s-1} \alpha_i + \alpha_{i-1}]u^{c^i}}
\end{split}
\]
for the same $\alpha_i$ as above except $\alpha_{r-1}=0$.}
Hence the action of some power of $c$ has a fixed point, contrary to (P6).
This shows that $A$ is torsion-free as an $\mathcal{R}$-module.

Recall $\mathcal{R}$ is a principal entire ring. Since $A$ f.g.\ and torsion-free as an $\mathcal{R}$-module, it is a free module by \cite[Thm.\ XV.2.2]{Lang:84} so that $A$ as a group is isomorphic to $\bigoplus_{1 \le i \le k} R_i$ for some positive integer $k$, where each $R_i$ is isomorphic to the additive group of $\mathcal{R}$.
Observe that $R_i \rtimes C \cong H_p$ for each $i$, where the action of $c$ on $R_i$ is defined by $P \mapsto P \cdot c$, and so $|R_i:[R_i,C]| = p$.
If $k >1$, then $|A:G'| = \left|\bigoplus_i R_i:\left[\bigoplus_i R_i,C\right]\right| > p$, contrary to (P2).

The last thing we need to show is that the action of $c$ on $A$ is correct.
To avoid confusion, here we denote by $d_H$, $A_H$ one of the generators and the normal subgroup of $H_p$ respectively.
Since $c$ has infinite order and each power of $c$ (except the identity) acts without fixed points, the action of $c$ on $A$ is equivalent to the action of $d_H^m$ on $A_H$ for some $m \ge 1$.
But if $m > 1$, then $A \nsubseteq G' \oplus \langle a \rangle$, contrary to~(P2).
\end{proof}

\subsection{Semidirect products of f.g.~groups} \label{Oger}

In \cite{Oger:06}, Oger gave examples of QFA groups, which are semidirect products of $\ZZ[u]$ and infinite cyclic $\langle u \rangle$ where $u$ is a complex number satisfying certain conditions.
Since both $\ZZ[u], \langle u \rangle$ are f.g.~abelian, we can talk about the rank of $\ZZ[u]$ as a free abelian group, making the proof fairly different from the previous examples.

Let $\mathcal{R}$ be a commutative ring.
An element $\alpha$ of $\mathcal{R}$ is said to be {\it integral} over $\mathcal{R}$ if there exists a monic (i.e.\ the leading coefficient is 1) polynomial $P$ over $\mathcal{R}$ such that $P(\alpha)=0$.
Let $\mathcal{S}$ be a commutative ring containing $\mathcal{R}$ as a subring.
Then, the elements of $\mathcal{S}$ integral over $\mathcal{R}$ form a subring of $\mathcal{S}$.
This ring is called the {\it integral closure} of $\mathcal{R}$ in $\mathcal{S}$ (see \cite[IX, \S1]{Lang:84}).

\begin{theorem}[Oger \cite{Oger:06}] \label{Oger thm}
Let $u$ be a complex number such that
\begin{itemize}
\item $\ZZ[u]$ is the integral closure of $\ZZ$ in $\QQ[u]$
\item the multiplicative group $(\ZZ[u]^*, \times)$ is infinite and generated by $u$ and $-1$.
\end{itemize}
Then there exists a first-order sentence $\psi$ which characterizes, among f.g.\ groups, those which are isomorphic to semidirect products $A \rtimes \langle u \rangle$, where $A$ is a non-zero ideal of $\ZZ[u]$, and the action of $u$ on $A$ is defined by $x \mapsto xu$.
\end{theorem}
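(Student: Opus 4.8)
The plan is to give the sentence explicitly in the form $\exists d\ \psi_u(d)$, where $\psi_u$ is a conjunction of first-order conditions that hold in every $H = A \rtimes \langle u\rangle$ (with $A$ a non-zero ideal of $\ZZ[u]$ and $d$ a generator of the cyclic group $\langle u\rangle$) and, conversely, force a finitely generated model to have this shape. Write $K = \QQ[u]$; by the first hypothesis $K$ is a number field, $\ZZ[u] = \mathcal{O}_K$ is a Dedekind domain, and $u$ is an algebraic integer, so its minimal polynomial $f(X) = X^n + c_{n-1}X^{n-1}+\cdots+c_0$ lies in $\ZZ[X]$; note also $c_0 = \pm 1$ since $u$ is a unit. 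Fix a prime $q$, and let $e$ be the exponent of the finite ring $\ZZ[u]/(u-1)\ZZ[u]$. Using the abbreviations $G' := \{[x,y] : x,y \in G\}$, $A := \{g \in G : g^e \in G'\}$ and $C := C(d)$, the conditions are:
\begin{itemize}
\item[(Q1)] $G$ is non-abelian;
\item[(Q2)] the commutators of $G$ form a subgroup (so $G'$, hence $A$, is definable);
\item[(Q3)] $A$ and $C$ are abelian, $A \triangleleft G$, $A \cap C = \{1\}$ and $AC = G$ (i.e.\ $G = A \rtimes C$);
\item[(Q4)] $|C : C^2| = 2$;
\item[(Q5)] $a^{f(d)} = 1$ for all $a\in A$, i.e.\ $\mathtt{Conj}(a,d^n)\prod_{i<n}\mathtt{Conj}(a,d^i)^{c_i}=1$ in the notation of \S\ref{homomorphisms};
\item[(Q6)] $x^{-1}ax \neq a$ for all $a\in A\setminus\{1\}$ and $x\in C\setminus\{1\}$;
\item[(Q7)] $a \mapsto a^q$ is injective on $A$, and $|A : A^q| = q^n$.
\end{itemize}

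For the standard case $H = A \rtimes \langle u \rangle$ one checks each condition from the preliminaries. By Lemma~\ref{metabelian commutators} the commutators of $H$ form the subgroup $H' = (u-1)A$; since $A$ is an invertible $\ZZ[u]$-module, $A/(u-1)A \cong \ZZ[u]/(u-1)$ as abelian groups, so its exponent is $e$ and $\{g : g^e \in H'\} = A$ (an element with non-trivial $\langle u\rangle$-component has no power inside $H' \subseteq A$); $C(d) = \langle u\rangle$ because $\ZZ[u]$ is a domain and $u$ has infinite order; (Q5) holds as $f(u) = 0$; and (Q7) holds since $A \cong \ZZ^n$ as a group.

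For the general case, let $G$ be finitely generated with $d$ satisfying $\psi_u(d)$. Then $C \cong G/A$ is finitely generated abelian (Lemmas~\ref{f.g. factor}, \ref{f.g. abelian}) and, collecting the $A$-parts of a finite generating set of $G = AC$ and applying Lemma~\ref{prod conj}, $A$ is finitely generated as a $\ZZ[C]$-module. The crucial first point is that $d$ has infinite order: otherwise conjugation by $d$ is annihilated on $A$ by both $f(X)$ and some $X^r - 1$, which are coprime in $\QQ[X]$ (as $f$ is irreducible and $u$ is not a root of unity), so a non-zero integer annihilates $A$; then $A$ has bounded exponent, whence for the fixed prime $q$ either $A$ has $q$-torsion (contradicting injectivity of $a\mapsto a^q$) or multiplication by $q$ is bijective on $A$ (so $A = A^q$, contradicting $|A:A^q| = q^n$). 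Hence $\langle d\rangle\cong\ZZ$; by (Q4) then $C\cong\ZZ\times F$ with $F$ finite of odd order and $[C:\langle d\rangle]$ finite, so — since by (Q5) the $\ZZ[\langle d\rangle]$-action on $A$ factors through $\ZZ[X]/(f)\cong\ZZ[u]$, which is module-finite over $\ZZ$ — $A$ is finitely generated as an abelian group, and then (Q7) gives $A\cong\ZZ^n\times T$ with $T$ finite and $q$-torsion-free. Now (Q5) makes $A$ a $\ZZ[u]$-module; its torsion subgroup $T$ is characteristic, hence a submodule, and $A/T\cong\ZZ^n$ is a finitely generated torsion-free module over the Dedekind domain $\ZZ[u] = \mathcal{O}_K$, hence projective, hence a direct sum of fractional ideals — comparing $\ZZ$-ranks, a single one $J$, on which $d$ acts as multiplication by $u$. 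If $T\neq 0$ it has a finite simple submodule on which a suitable non-trivial power of $d$ acts trivially, contradicting (Q6); so $T = 0$ and $A\cong J$. Finally $C$ acts on $A\cong J$ centralizing the $\ZZ[u]$-action, hence embeds (injectively, since $Z(G) = 1$ by (Q1) and (Q6)) into $\mathrm{Aut}_{\ZZ[u]}(J) = \ZZ[u]^* = \langle u\rangle\times\langle -1\rangle$ with $d\mapsto u$; as $C\cong\ZZ\times F$ with $F$ of odd order, $F = 1$ and the image is the unique $\ZZ$-subgroup of $\ZZ[u]^*$ containing $u$, namely $\langle u\rangle$. Thus $C = \langle d\rangle$ and $G = A\rtimes\langle d\rangle$; rescaling $J$ to an integral ideal $A'$ gives $G \cong A'\rtimes\langle u\rangle$ with $u$ acting by multiplication, as required.

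The step I expect to be the obstacle is exactly this first point in the general case — simultaneously forcing $d$ to have infinite order and $A$ to be a finitely generated abelian group of the right rank $n$ — since once these hold the desired structure is essentially read off from the $\ZZ[u]$-module structure coming from (Q5) and the fact that $\ZZ[u]$ is a Dedekind domain with unit group $\langle u\rangle\times\langle -1\rangle$. This is what dictates the "arithmetic" conditions (Q4) and, especially, (Q7). Some extra care is needed in the degenerate case where $u - 1$ is a unit of $\ZZ[u]$ (so $e = 1$ and $A = G'$), but the argument above is written to cover it.
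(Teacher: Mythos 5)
Your argument is correct, but it takes a genuinely different route from the paper's. The paper quantifies over two witnesses $y,z$ and defines $A=C(y)$, $C=C(z)$; the witness $y$ is then reused to produce the full-rank submodule $A_{(y)}=\langle\{ f^k(y)\mid 0\le k\le n-1\}\rangle$, which is how the action of $z$ gets identified with the action of $u$ on a non-zero ideal. You instead use a single witness $d$, define $A$ from the commutator subgroup via the exponent $e$ of $\ZZ[u]/(u-1)\ZZ[u]$ (in the style of Theorem \ref{BS thm}), and identify $A$ with a fractional ideal via the structure theory of finitely generated torsion-free modules over the Dedekind domain $\ZZ[u]=\mathcal{O}_K$. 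Your rank and torsion control also differ: where the paper uses $|A:2A|=2^n$ together with the fixed-point axiom and finiteness of the torsion subgroup of a f.g.\ abelian group, you use injectivity of $a\mapsto a^q$ plus $|A:A^q|=q^n$ (which also gives a cleaner proof that $d$ has infinite order, via coprimality of $f$ and $X^r-1$ rather than minimal-polynomial divisibility), and you kill torsion by exhibiting a finite simple $\ZZ[u]$-submodule fixed pointwise by a power of $d$. The most substantial divergence is the endgame: the paper needs the extra axiom (P4) and the irreducibility of $1+X+\cdots+X^{p-1}$ to force $C$ torsion-free, then introduces a generator $c$ with $c^k=z$, the minimal polynomial $Q$ of its action and a root $v$ with $v^k=u$, concluding $k=\pm1$; you avoid all of this by embedding $C$ injectively (via the fixed-point axiom) into $\mathrm{Aut}_{\ZZ[u]}(J)\cong\ZZ[u]^*\cong\ZZ\times\ZZ_2$, which simultaneously removes the odd torsion allowed by $|C:C^2|=2$ and pins down $C=\langle d\rangle$ mapping onto $\langle u\rangle$. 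Both routes use the two hypotheses at the analogous places: integral closedness for the module theory (in your case: invertibility of the ideal, $A/(u-1)A\cong\ZZ[u]/(u-1)\ZZ[u]$, $\mathrm{End}_{\ZZ[u]}(J)=\ZZ[u]$, and the norm trick upgrading $\ZZ$-torsion-freeness to $\ZZ[u]$-torsion-freeness), and the unit-group hypothesis to finish. Yours buys one fewer axiom and a slicker conclusion at the cost of heavier (though standard) commutative algebra, each piece of which deserves an explicit line in a write-up; the paper's version keeps the module theory elementary and anchored at the witness $y$.
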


\begin{corollary}[\cite{Oger:06}] \label{Oger col}
If $u$ satisfies the conditions above and $\ZZ[u]$ is principal, then $\ZZ[u] \rtimes \langle u \rangle$ is QFA.
\end{corollary}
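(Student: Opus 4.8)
The plan is to derive the corollary directly from Theorem \ref{Oger thm}. Let $\psi$ be the sentence provided by that theorem, and set $H = \ZZ[u] \rtimes \langle u \rangle$ with $\langle u \rangle$ acting by $x \mapsto xu$. First I would check that $H$ is an infinite f.g.\ group, so that it is a legitimate candidate for QFA. Since $\ZZ[u]$ is the integral closure of $\ZZ$ in $\QQ[u]$, the element $u$ is in particular integral over $\ZZ$, so $\QQ[u] = \QQ(u)$ is a number field and $\ZZ[u]$ is its ring of integers; hence $\ZZ[u]$ is a finitely generated (free) abelian group of rank $[\QQ(u):\QQ]$, which is at least $1$. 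Thus $H$ is infinite and f.g. Taking $A = \ZZ[u]$ (a non-zero ideal of itself) in Theorem \ref{Oger thm} shows $H \models \psi$.

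Next, suppose $G$ is an arbitrary f.g.\ group with $G \models \psi$. By Theorem \ref{Oger thm}, $G \cong A \rtimes \langle u \rangle$ for some non-zero ideal $A$ of $\ZZ[u]$, where $\langle u \rangle$ acts on $A$ by multiplication by $u$. (Note that $u$ is a unit of $\ZZ[u]$ by hypothesis, so multiplication by $u$ really is an automorphism of $A$.) It remains to identify this semidirect product with $H$, and this is the step where principality is used.

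Since $\ZZ[u]$ is principal, $A = \alpha \ZZ[u]$ for some $\alpha \in \ZZ[u] \setminus \{0\}$. The map $\phi \colon \ZZ[u] \to A$, $x \mapsto \alpha x$, is an isomorphism of abelian groups, and crucially it intertwines the two actions of $u$, since $\phi(xu) = \alpha x u = \phi(x)u$. Consequently $(x, u^k) \mapsto (\phi(x), u^k)$ is a well-defined group isomorphism $H \to A \rtimes \langle u \rangle$ of the two semidirect products. Therefore $G \cong A \rtimes \langle u \rangle \cong H$, so $\psi$ characterizes $H$ among f.g.\ groups, i.e.\ $H = \ZZ[u] \rtimes \langle u \rangle$ is QFA.

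The only point needing care is the last one --- verifying that the module isomorphism $\phi$ respects the twisted operation of the semidirect product --- but this reduces to the single displayed identity $\phi(xu) = \phi(x)u$. The substantive content is entirely carried by Theorem \ref{Oger thm}; the passage to QFA is just the algebraic observation that in a principal ring every non-zero ideal is isomorphic, as a module over the ring, to the whole ring via multiplication by a generator, and that this isomorphism commutes with multiplication by $u$.
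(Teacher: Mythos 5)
Your proposal is correct and follows essentially the same route as the paper: both deduce the corollary from Theorem \ref{Oger thm} by using principality to write any non-zero ideal $A$ as $a\,\ZZ[u]$ and observing that multiplication by $a$ is an additive isomorphism $\ZZ[u]\to A$ commuting with the action of $u$, hence extending to an isomorphism of the semidirect products. Your extra checks (that $H$ is infinite and f.g., and that $u$ acts as an automorphism) are fine but not where the content lies.
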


\begin{proof}
Let $u$ be a complex number that satisfies all of these conditions.
Let $A$ be a non-zero ideal of $\ZZ[u]$.
Then, since $\ZZ[u]$ is principal, there exists $a \in \ZZ[u]$ such that $A =a \cdot\ZZ[u]$.
If we define a map ${\Phi : \ZZ[u] \rightarrow A}$ by $\Phi(x) = ax$, then $\Phi$ is clearly a group isomorphism ${(\ZZ[u], +) \rightarrow (A, +)}$.
Since $\Phi$ also preserves the action of~$u$ (as $\Phi(xu)=axu=\Phi(x) \cdot u$), $\Phi$ can be extended to an isomorphism $\ZZ[u] \rtimes \langle u \rangle \rightarrow A \rtimes \langle u \rangle$.
\end{proof}

One example of such $u$ was given in \cite{Oger:06}, namely $u=2+\sqrt{3}$.
Clearly ${\ZZ[u]=\ZZ[\sqrt{3}]}$ is the integral closure of $\ZZ$ in $\QQ[\sqrt{3}]$.
One can show that each invertible element $x\in \ZZ[\sqrt{3}]$ has the form $x=\pm (2+\sqrt{3})^n$ for some integer $n$ by considering the sequence $\{x_k\}$ defined by $x_0=x$, $x_{k+1}=x_k \cdot (2+\sqrt{3})^{-1}$.
Since the (norm) function $N:\ZZ[\sqrt{3}] \rightarrow \NN$ defined by $N(a+b\sqrt{3})=|a^2-3b^2|$ satisfies the conditions
\begin{itemize}
\item if $y_1, y_2 \neq 0$ then $N(y_1) \le N(y_1 \cdot y_2)$
\item if $y_2 \neq 0$ then there exist $q,r \in \ZZ[\sqrt{3}]$ such that $y_1=q \cdot y_2+r$ and ${f(r) < f(y_2)}$
\end{itemize}
for any $y_1, y_2 \in \ZZ[\sqrt{3}]$, the ring $\ZZ[\sqrt{3}]$ is an Euclidean domain and so is principal (see \cite[5.5]{Ribenboim:01}).

\begin{proof}[Proof of Theorem \ref{Oger thm}]
The first-order sentence describing the semidirect products is $\exists y \exists z \ \phi(y,z)$ where $\phi(y,z) \equiv$ [(P0) $\wedge \ \ldots \ \wedge$ (P6)].
Let $P$ be the minimal polynomial of $u$ over $\ZZ$, and let $n = \mathtt{deg}(P)$.
We use additive notation in $A$.

First, we state that $y,z$ are non-identity elements.
Note both $0,1$ refer to the identity element.
\begin{itemize}
\item[(P0)] $y \neq 0$, $z \neq 1$
\end{itemize}

Next, we define $A,C$ and describe some of their properties.
\begin{itemize}
\item[(P1)] $A = C(y)$ and $C = C(z)$ are abelian, and $G = A \rtimes C$
\item[(P2)] $|A:2A|=2^n$
\item[(P3)] $|C:C^2|=2$
\item[(P4)] $x^k \neq 1$ for $x \in C-\{1\}$, $1 \le k \le n+1$
\end{itemize}

The rest is the following.
\begin{itemize}
\item[(P5)] $\mathtt{Conj}(w,x) \neq w$ for $w \in A-\{0\}$, $x \in C-\{1\}$
\item[(P6)] $P(f)=0$ for the automorphism $f$ of $A$ defined by $w \mapsto w^z$
\end{itemize}
(P6) is equivalent to saying $P(z)=0$, but we need to express it this way because the group operation (which is multiplication when considering $C$) is the only operation we are allowed to use.

Let $G$ be a f.g.\ model of $\psi$.
First, we show that $z$ has infinite order.
For, suppose $z^t = 1$ for some positive integer $t>1$.
Then $f$ is a root of the polynomial $X^t-1$ and so $P$ divides $X^t-1$ since $P$ is also a minimal polynomial of $f$.
But then $u^t-1=0$, contrary to the fact that $u,-1$ generate the infinite multiplicative group $\ZZ[u]^*$.
Hence $z$ has infinite order, in particular, $|C:\langle z \rangle|$ is finite as $C$ is f.g.\ abelian of torsion-free rank 1 by (P1), (P3).

Let $w_1, \ldots, w_r \in A$, $x_1, \ldots, x_r \in C$ such that $\{ w_1 x_1, \ldots, w_r x_r \}$ generates $G$, and let $z_1, \ldots, z_s \in C$ such that $C = z_1 \langle z \rangle \cup \ldots \cup z_s \langle z \rangle$ i.e.\ $\{z_1, \ldots, z_s\}$ contains at least one representative from each coset of $\langle z \rangle$ in $C$.
Then by Lemma \ref{prod conj},
\[
\begin{split}
A&=\langle \{ \mathtt{Conj}(w_i,x) \ | \ 1 \le i \le r, \ x \in C \} \rangle\\
&=\langle \{ \mathtt{Conj}(\mathtt{Conj}(w_i,z_j),z^k) \ | \ 1 \le i \le r, \ 1 \le j \le s, \ k \in \ZZ \} \rangle
\end{split}
\]
because each $x \in C$ can be written in the form $x=z_j \cdot z^k$ for some \mbox{$1 \le j \le s$, $k \in \ZZ$.}
Since $\langle \{ \mathtt{Conj}(w,z^k) \ | \ k \in \ZZ \} \rangle$ is f.g.\ for each $w \in C$ by (P6), this means that $A$ is f.g.

Now we show that $A$ is torsion-free.
For, suppose $w \in A-\{0\}$ is a torsion element.
Then ${\{f^k(w) \ | \ k \in \ZZ \}}$ is contained in the torsion subgroup of $A$, which is finite since $A$ is f.g.\ abelian (see \cite[Exercise 8.1.5]{Kar.Mer:79}).
Hence there exist $k_1,k_2 \in \ZZ$ with $k_1 < k_2$ such that $f^{k_1}(w) = f^{k_2}(w)$.
But this means that $z^{k_2-k_1} \in C-\{1\}$ fixes $f^{k_1}(w)=\mathtt{Conj}(w,z^{k_1}) \in A-\{0\}$, contrary to (P5).

Since $A$ is f.g.\ torsion-free, it is free abelian of rank $n$ by (P2).
Also, the subgroup $A_{(y)} = \langle \{ f^k(y) \ | \ 0 \le k \le n-1 \} \rangle$ of $A$ has rank $n$ by (P6) and the minimality of~$P$.
Hence the action of $z$ on $A_{(y)}$, which has finite index in $A$, is equivalent to the action of $u$ on a non-zero ideal of $\ZZ[u]$, meaning that the action of $z$ on $A$ is also equivalent.

Now we show that $C$ is torsion-free.
Otherwise, there exists ${x \in C-\{1\}}$ of prime order $p \ge n+2$ by (P4).
But then $1 = y^{x^p-1} = (y^{x^{p-1} + \ldots + x + 1})^{x-1}$, or equivalently, $Y^x = Y$ where ${Y = y^{x^{p-1} + \ldots + x + 1}}$, and so $Y=1$ by (P2).
Since  $A$ is torsion-free (in particular, $y$ has infinite order) and ${X^{p-1}+ \ldots + X+1}$ is irreducible \mbox{(see \cite[Exercise IV.5.6]{Lang:05})}, ${y^{x^{p-1} + \ldots + x + 1}=1}$ means that the set $\{y^{x^k} \ | \ 0 \le k \le p-2\}$ generates a free abelian group of rank $p-1 \ge n+1$, which is a subgroup of~$A$.
But $A$ is free abelian of rank $n$, contradiction.

We know $C$ is f.g.\ torsion-free abelian of rank 1, or equivalently, infinite cyclic.
Choose a generator $c$ of $C$.
Then there exists $k \in \ZZ$ such that $c^k = z$.
Define an automorphism $g$ of $A$ by $w \mapsto w^c$ and let $Q$ be the minimal polynomial of $g$ over $\ZZ$.
We show that $\mathtt{deg}(Q)=n$.
Because $g$ is an automorphism of a free abelian group of rank~$n$, $\mathtt{deg}(Q)\le n$.
Also, since ${\langle\{g^{kr}(y) \ | \ 0 \le r \le n-1\}\rangle} = {\langle\{ f^r(y) \ | \ 0 \le r \le n-1 \}\rangle}$ has rank $n$, $\mathtt{deg}(Q) \ge n$.

Choose a root $v \in \mathbb{C}$ of $Q$ and an ideal $I$ of the integral closure of $\ZZ$ in $\QQ[v]$ so that the action of $c$ on $A$ is equivalent to the action of $v$ on $I$.
Because $g^k = f$, we can assume $v^k = u$ and so $\QQ[u] \subseteq \QQ[v]$.
But since both fields have dimension~$n$ over $\QQ$, $\QQ[u] = \QQ[v]$.
Now $v$ belongs to $\ZZ[v]=\ZZ[u]=\langle u,-1 \rangle \cup \{0\}$, so $k = \pm 1$ and $C= \langle c \rangle= \langle z \rangle$. 
\end{proof}

\subsection{Nilpotent groups} \label{UT}
Our last example is a nilpotent group.
We give the definition of (class 2) nilpotency later, and for now we only mention that it has a non-trivial center.
This fact stops us from describing it as a semidirect product, because we do not have the main weapon ``no action has a fixed point'' any more.

Let $U$ be the discrete Heisenberg group $UT_3(\ZZ)$, the group of upper unitriangular matrices (i.e.~the entries on the main diagonal are all $1$ and the entries below the diagonal are all $0$) over $\ZZ$.
Then $U$ is a nilpotent group of \mbox{class 2}.
That is, the $U'$ is contained in the center $Z$.
In fact, by \cite[Exercise 16.1.3]{Kar.Mer:79}, $U$ is isomorphic to the free class 2 nilpotent group $F$ with two generators.
Let $t_{mn}(k)$ denote the 3-by-3 matrix with $1$ in its diagonal entries, $k$ in the $m$-th row  $n$-th column entry and $0$ everywhere else.
Then the generators of $F$ correspond to $a = t_{23}(1)$ and $b = t_{12}(1)$.

The following is a well-known fact about nilpotent groups.
\begin{lemma} \label{f.g. nilpotent}
If $G$ is a nilpotent group such that $G/G'$ is f.g., then every subgroup of $G$ is f.g.
In particular, every subgroup of a f.g.\ nilpotent group is f.g.
\end{lemma}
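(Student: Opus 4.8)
The plan is to prove both statements by induction on the nilpotency class of $G$, reducing the general subgroup case to the special case $G/G'$ f.g., and then handling that case directly. First I would recall that for a nilpotent group $G$ the lower central series $G = \gamma_1(G) \supseteq \gamma_2(G) = G' \supseteq \gamma_3(G) \supseteq \cdots \supseteq \gamma_{c+1}(G) = \{1\}$ terminates, where $\gamma_{i+1}(G) = [\gamma_i(G), G]$, and that each quotient $\gamma_i(G)/\gamma_{i+1}(G)$ is a central, hence abelian, section.

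\medskip

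For the first (main) statement, suppose $G$ is nilpotent of class $c$ with $G/G'$ f.g., and let $H \le G$; I want to show $H$ is f.g. I would argue by induction on $c$. If $c = 1$ then $G$ is abelian and f.g., so every subgroup is f.g.\ by Lemma~\ref{f.g. abelian} (a subgroup of a finite direct sum of cyclic groups is again such a sum, or one simply cites that f.g.\ abelian groups are Noetherian). For the inductive step, consider the central subgroup $Z = \gamma_c(G)$. The quotient $G/Z$ is nilpotent of class $< c$ and $(G/Z)/(G/Z)'$ is a quotient of $G/G'$, hence f.g.\ by Lemma~\ref{f.g. factor}; so by the inductive hypothesis every subgroup of $G/Z$ is f.g. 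In particular $HZ/Z \cong H/(H \cap Z)$ is f.g. It then suffices to show $H \cap Z$ is f.g., since an extension of a f.g.\ group by a f.g.\ group is f.g.\ (lift finitely many generators of the quotient and adjoin finitely many generators of the subgroup). Now $H \cap Z \le Z = \gamma_c(G)$, so it is enough to know that $\gamma_c(G)$ itself is f.g.\ abelian; since it is abelian, by Lemma~\ref{f.g. abelian} it suffices that $\gamma_c(G)$ is f.g.\ as a group.

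\medskip

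The main obstacle is precisely this last point: showing $\gamma_c(G)$ is f.g.\ when only $G/G'$ is assumed f.g. The key fact is that $\gamma_i(G)/\gamma_{i+1}(G)$ is generated, as an abelian group, by the images of the basic commutators $[g_{j_1}, g_{j_2}, \dots, g_{j_i}]$ where the $g_j$ range over a fixed finite generating set of $G$ modulo $G'$ — equivalently, the iterated commutator map is $\ZZ$-multilinear on the abelianizations, so finitely many generators of $G/G'$ produce finitely many generators of each $\gamma_i/\gamma_{i+1}$. I would establish this by downward induction on $i$: given that $G/G'$ is f.g., pick representatives $g_1, \dots, g_n$ of a generating set; then $\gamma_2(G)/\gamma_3(G)$ is generated by the $[g_k, g_\ell]$ modulo $\gamma_3$ (using the commutator identities $[xy,z] \equiv [x,z][y,z]$ and $[x,yz] \equiv [x,y][x,z]$ modulo the next term of the series), and inductively $\gamma_i(G)/\gamma_{i+1}(G)$ is f.g.; since the series has finite length and each factor is f.g., $\gamma_c(G) = \gamma_c(G)/\gamma_{c+1}(G)$ is f.g., and indeed every $\gamma_i(G)$ is f.g.\ by the extension argument. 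This completes the induction.

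\medskip

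For the second statement ("every subgroup of a f.g.\ nilpotent group is f.g."), I would simply observe that if $G$ is f.g.\ and nilpotent then $G/G'$ is f.g.\ by Lemma~\ref{f.g. factor}, so the hypothesis of the first statement is met and the conclusion follows. Thus the whole argument rests on the multilinearity of iterated commutators modulo the lower central series, which is the one place requiring genuine computation with commutator identities; everything else is bookkeeping with the two extension facts (quotients and extensions of f.g.\ groups are f.g.) already available in the paper.
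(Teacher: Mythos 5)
Your argument is correct in outline, but note that the paper does not actually prove the first statement: it simply cites Robinson \cite[3.1.6, 5.2.17]{Robinson:82} (the maximal condition on subgroups is closed under extensions, and a nilpotent group whose abelianization is f.g.\ has all subgroups f.g.), and only your handling of the second statement coincides with what the paper writes (Lemma \ref{f.g. factor}). What you supply is essentially the standard argument behind that citation: induct on the class, split a subgroup $H$ along the central term $\gamma_c(G)$, use that an extension of a f.g.\ group by a f.g.\ group is f.g., and reduce to finite generation of the lower central factors via commutator calculus. Two points to tighten. First, your induction on the factors $\gamma_i(G)/\gamma_{i+1}(G)$ runs upward in $i$, not ``downward.'' Second, and more substantively: since your $g_1,\dots,g_n$ are only assumed to generate $G$ modulo $G'$, the bilinearity of $(\bar u,\bar g)\mapsto\overline{[u,g]}$ as a map $\gamma_i/\gamma_{i+1}\times G/G'\to\gamma_{i+1}/\gamma_{i+2}$ needs, besides the two identities you quote, the inclusion $[\gamma_i(G),\gamma_2(G)]\le\gamma_{i+2}(G)$ (three subgroups lemma, or the general $[\gamma_i,\gamma_j]\le\gamma_{i+j}$) so that the second argument may be taken modulo $G'$; the quoted identities alone only give bilinearity with the second variable ranging over all of $G$. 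Alternatively, first check that the $g_j$ in fact generate $G$ (an easy induction on the class: $G=\langle g_j\rangle\gamma_c(G)$ by the inductive hypothesis, and since $\gamma_c(G)$ is central one gets $G'=\langle g_j\rangle'$, whence $\gamma_c(G)\le G'=\langle g_j\rangle'\le\langle g_j\rangle$ for $c\ge 2$), after which your two identities suffice because the second variable can range over the generators of $G$ themselves. With either repair the proof is complete and serves as a self-contained substitute for the paper's citation.
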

\begin{proof}
See Robinson \cite[3.1.6, 5.2.17]{Robinson:82} for the proof of the first part.
The second part follows because every factor group of a f.g.~group is f.g.~(Lemma \ref{f.g. factor}).
\end{proof}

QFAness of $U$ can be shown using Oger and Sabbagh's criterion (\cite[Thm.10]{Oger.Sabbagh:06}), but here we give a sentence describing $U$ to make it easier to see how $U$ can be characterized in first-order.
The following facts will be used in the proof of \mbox{Theorem \ref{UT thm}}.

\begin{lemma} \label{nilpotent commutators}
Let $G$ be a nilpotent group of class 2 and let $x,y \in G$.
Then, $[x^{m_1}y^{n_1},x^{m_2}y^{n_2}]=[x,y]^{m_1n_2-m_2n_1}$ for any $m_1,m_2,n_1,n_2 \in \ZZ$.
\end{lemma}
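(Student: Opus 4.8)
The plan is to prove the commutator identity $[x^{m_1}y^{n_1},x^{m_2}y^{n_2}]=[x,y]^{m_1n_2-m_2n_1}$ by exploiting the class-$2$ hypothesis, which says $G'\subseteq Z(G)$; in particular every commutator $[x,y]$ is central, so it commutes with everything and bilinearity-type expansion rules become available. The key algebraic facts I would use are the standard commutator identities $[ab,c]=[a,c]^b[b,c]$ and $[a,bc]=[a,c][a,b]^c$ (cf.\ the rule already cited from \cite[3.2.(3)]{Kar.Mer:79} in the proof of Lemma \ref{metabelian commutators}), together with the observation that when $[a,c]$ is central the conjugate $[a,c]^b$ equals $[a,c]$, so these reduce to $[ab,c]=[a,c][b,c]$ and $[a,bc]=[a,b][a,c]$. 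Thus on a class-$2$ group the commutator map is genuinely $\ZZ$-bilinear.

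First I would record the two special cases $[x^m,y]=[x,y]^m$ and $[x,y^n]=[x,y]^n$ for all $m,n\in\ZZ$: for positive exponents these follow by an immediate induction from the simplified identities above, the case of exponent $0$ is trivial since $[1,y]=1$, and the negative cases follow from $[a^{-1},c]=([a,c]^{-1})^{a^{-1}}=[a,c]^{-1}$ (again using centrality to drop the conjugation). Combining the two gives $[x^m,y^n]=[x,y]^{mn}$. Second, I would expand the general expression using bilinearity in both slots:
\[
[x^{m_1}y^{n_1},x^{m_2}y^{n_2}]=[x^{m_1},x^{m_2}]\,[x^{m_1},y^{n_2}]\,[y^{n_1},x^{m_2}]\,[y^{n_1},y^{n_2}].
\]
Here $[x^{m_1},x^{m_2}]=1$ and $[y^{n_1},y^{n_2}]=1$ since powers of a single element commute, while $[x^{m_1},y^{n_2}]=[x,y]^{m_1n_2}$ and $[y^{n_1},x^{m_2}]=[y,x]^{n_1m_2}=[x,y]^{-n_1m_2}$ using $[y,x]=[x,y]^{-1}$ (valid because $[x,y]$ is central, so $[y,x]=([x,y]^{-1})^{\text{conj}}=[x,y]^{-1}$). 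Multiplying these — all four factors lie in the central, hence abelian, subgroup $G'$, so we may freely add exponents — yields $[x,y]^{m_1n_2-n_1m_2}$, which is the claim.

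The main thing to be careful about, rather than a genuine obstacle, is justifying the repeated use of centrality at every step: each time I pass from $[ab,c]^{\text{(with conjugation)}}$ to $[ab,c]^{\text{(without)}}$ I must note the relevant commutator already lies in $G'\subseteq Z(G)$, and I should make the elementary lemma $[a^{-1},c]=[a,c]^{-1}$ explicit so the negative-exponent cases are not hand-waved. Once the bilinearity of the commutator map on a class-$2$ group is cleanly established, the rest is the short bookkeeping computation displayed above, and no further subtlety arises.
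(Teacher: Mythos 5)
Your proof is correct and follows essentially the same route as the paper: reduce the commutator identities $[ab,c]=[a,c]^b[b,c]$ (and its companion) to their bilinear forms using $G'\subseteq Z(G)$, establish $[x^m,y^n]=[x,y]^{mn}$ including negative exponents, and expand the general bracket into four central factors. The only cosmetic difference is that the paper obtains $[x^m,y^n]=[x,y]^{mn}$ by directly counting swaps of $y^{-1}$ past $x$, whereas you get it by induction from the bilinear identities; the substance is identical.
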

\begin{proof}
First, we show that $[x^m,y^n]=[x,y]^{mn}$ for any $m,n \in \ZZ$.
Note
\[
\begin{split}
y^{-1}x&=xx^{-1} \cdot y^{-1}x \cdot yy^{-1}\\
&=x \cdot [x,y] \cdot y^{-1}
\end{split}
\]
(i.e.\ we get $[x,y]$ every time we swap $y^{-1}$ and~$x$).
Since $G$ is class 2 nilpotent, $[x,y] \in G' \subseteq Z(G)$ and so
\[
\begin{split}
[x^m,y^n]&=x^{-m}y^{-n}x^my^n\\
&=x^{-m}y^{-(n-1)}xyx^{m-1}y^n[x,y]\\
&\hspace{50pt}\vdots\\
&=x^{-m}x^my^{-n}y^n[x,y]^{mn}\\
&=[x,y]^{mn}
\end{split}
\]
holds for positive $m,n$.
Also, since
\[
\begin{split}
[x^{-1},y]&=xy^{-1}x^{-1}y \cdot xx^{-1}\\
&=[y,x]^{x^{-1}}\\
&=[y,x]=[x,y]^{-1}
\end{split}
\]
and similarly $[x,y^{-1}]=[x,y]^{-1}$ holds in $G$, $[x^m,y^n]=[x,y]^{mn}$ holds for any $m,n \in \ZZ$.

Now, since $G' \subseteq Z(G)$, the commutator rule ${[ab,c] = [a,c]^b[b,c]}$ (\cite[3.2.(3)]{Kar.Mer:79}) can be reduced to ${[ab,c] = [a,c][b,c]}$, and by taking inverse we also get $[c,ab]=[c,b][c,a]$.
So we have
\[
\begin{split}
[x^{m_1}y^{n_1},x^{m_2}y^{n_2}]&=[x^{m_1},x^{m_2}y^{n_2}][y^{n_1},x^{m_2}y^{n_2}]\\
&=[x^{m_1},y^{n_2}][x^{m_1},x^{m_2}][y^{n_1},y^{n_2}][y^{n_1},x^{m_2}]\\
&=[x,y]^{m_1n_2} \cdot 1 \cdot 1 \cdot [y,x]^{m_2n_1}\\
&=[x,y]^{m_1n_2-m_2n_1}
\end{split}
\]
as required.
\end{proof}

\begin{lemma} \label{UT center}
The center $Z$ of $UT_3(\ZZ)$ is the infinite cyclic group generated by $c = [a,b] = t_{13}(1)$, which coincides with the set of commutators.
\end{lemma}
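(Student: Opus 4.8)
The plan is to argue everything by explicit computation in $\UT$. Write a typical element as $M(x,y,z)$, the matrix with $x,y,z$ in the $(1,2)$, $(2,3)$, $(1,3)$ positions (so $M(x,y,z) = t_{12}(x)\,t_{23}(y)\,t_{13}(z)$), and record the multiplication rule
\[
M(x,y,z)\,M(x',y',z') = M\bigl(x+x',\ y+y',\ z+z'+xy'\bigr),
\]
obtained by multiplying the matrices out. In particular $t_{13}(z)\,t_{13}(z') = t_{13}(z+z')$, so $\{t_{13}(z) : z \in \ZZ\}$ is an infinite cyclic subgroup with generator $c = t_{13}(1)$.

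First I would pin down $Z$. From the multiplication rule, $M(x,y,z)$ commutes with $M(x',y',z')$ exactly when $xy' = x'y$; requiring this for all $x',y'$ (take $(x',y')=(1,0)$ and then $(0,1)$) forces $x=y=0$. Hence $Z = \{t_{13}(z):z\in\ZZ\} = \langle c\rangle$ is infinite cyclic. A further application of the multiplication rule to $a = M(0,1,0)$ and $b = M(1,0,0)$ computes the commutator and shows $[a,b]$ equals $c$ (or $c^{-1}$, according to the commutator convention; either is harmless), so in particular $c$ is a commutator.

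It remains to identify $Z$ with the set of commutators. One containment is free: $U$ is nilpotent of class $2$, so every commutator lies in $U' \subseteq Z = \langle c\rangle$. For the other, Lemma \ref{nilpotent commutators} applied with $x=a$, $y=b$ gives $[a^{k},b] = [a,b]^{k}$ for every $k\in\ZZ$, so every power of $[a,b]$ — hence, since $[a,b]$ generates $\langle c\rangle$, every element of $Z$ — is a commutator. Chaining the inclusions $\langle c\rangle \subseteq \{\text{commutators}\} \subseteq U' \subseteq \langle c\rangle$ forces all four sets to coincide, which proves the lemma (and incidentally shows $U' = \langle c\rangle$).

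Every step here is a routine computation in $\UT$ or an application of the commutator identities already established, so there is no genuine obstacle. The only points demanding a little care are getting the sign right when identifying $c$ with $[a,b]$ versus $[b,a]$ (harmless, since these are inverses), and noticing that ``commutators are central'' alone is not enough — one must also invoke Lemma \ref{nilpotent commutators} to see that every central element is in fact a \emph{single} commutator, which is precisely what makes the last clause of the statement non-trivial.
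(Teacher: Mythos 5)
Your proof is correct and takes essentially the same approach as the paper: an explicit computation with unitriangular matrices identifies $Z=\{t_{13}(z)\ |\ z\in\ZZ\}=\langle c\rangle$, and Lemma~\ref{nilpotent commutators} gives $[a^k,b]=[a,b]^k$, so every element of $Z$ is a single commutator while every commutator lies in $\langle c\rangle$. The only cosmetic differences are that the paper gets the inclusion of the commutators into $\langle c\rangle$ from the normal form $u=a^mb^nc^l$ and the formula $[u,v]=c^{m_1n_2-m_2n_1}$, where you instead cite class-2 nilpotency ($U'\subseteq Z$) already established in the surrounding text, and your caveat about the sign of $[a,b]$ is well taken (with the convention $[x,y]=x^{-1}y^{-1}xy$ one in fact gets $t_{13}(-1)$, which generates the same group).
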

\begin{proof}
Since \[ \left[ \left(
\begin{array}{@{}ccc@{}}
1 & \alpha_1 & \beta_1  \\
0 & 1 & \gamma_1 \\
0 & 0 & 1
\end{array}
\right), \left(
\begin{array}{@{}ccc@{}}
1 & \alpha_2 & \beta_2  \\
0 & 1 & \gamma_2 \\
0 & 0 & 1
\end{array}
\right) \right] = t_{13}(\alpha_1\gamma_2-\alpha_2\gamma_1), \]
the center is precisely
$Z = \{t_{13}(z) \ | \ z \in \ZZ\} = \langle c \rangle$.

Now by \cite[Exercise 16.1.3]{Kar.Mer:79}, each element $u \in U$ can be written as $u = a^mb^nc^l$ for some $m,n,l \in \ZZ$ and so
\[
\begin{split}
[u,v]&=[a^{m_1}b^{n_1}c^{l_1},a^{m_2}b^{n_2}c^{l_2}]\\
&=[a^{m_1}b^{n_1},a^{m_2}b^{n_2}]\\
&=[a,b]^{m_1n_2-m_2n_1}\\
&=c^{m_1n_2-m_2n_1} \in \langle c \rangle
\end{split}
\]
for any $u=a^{m_1}b^{n_1}c^{l_1},v=a^{m_2}b^{n_2}c^{l_2} \in U$ by Lemma \ref{nilpotent commutators}.
Hence $U' = \langle c \rangle$.
\end{proof}

As a part of the sentence describing $U$, we use the modified version of a formula first introduced by Mal'cev \cite{Malcev:71}.
The formula $\mu(x,y;a,b)$ with parameters $a,b$ defines the ``square'' operation $M_{a,b}$ on the center $Z$ in the sense that $(Z, \circ, M_{a,b}) \cong (\ZZ, +, Q)$ where $Q=\{ (t,t^2) | \ t \in \ZZ\}$.
The formula is
\[
\begin{split}
\mu(x,y;a,b)\equiv\exists u \exists v\{&[u,a]=[v,b]=1 \ \wedge\\
&x=[a,v]=[u,b] \ \wedge\\
&y=[u,v]\}.
\end{split}
\]
This defines the ``square'' because $[a^m,b^n]=[a,b]^{mn}$ holds in $U$ by Lemma \ref{nilpotent commutators}.
\begin{theorem}[Nies \cite{Nies:03}] \label{UT thm}
$UT_3(\ZZ)$ is QFA.
\end{theorem}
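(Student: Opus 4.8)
The plan is to exhibit a sentence $\exists a\,\exists b\,\phi(a,b)$ whose finitely generated models are exactly $\cong U=UT_3(\ZZ)$, following the same template as the earlier theorems: list first-order properties (P0)--(P$k$) of $U$ and its generators $a,b$, assume a f.g.\ group $G$ satisfies them, and reconstruct the isomorphism. Since $U$ has non-trivial centre we cannot use the ``no fixed-point'' trick; instead the key is to pin down the centre $Z$ and, via the Mal'cev-style formula $\mu(x,y;a,b)$, transport a copy of $(\ZZ,+,\times)$ onto $Z$. So among the clauses I would include: (P0) $a,b\ne 1$; (P1) the commutators form a subgroup, so $G'$ is definable; (P2) $G'\subseteq Z(G)$ and $G$ is nilpotent of class $2$ (expressible: $[[x,y],z]=1$ for all $x,y,z$, plus $G'\ne 1$); (P3) $G/G'$ is generated by the images of $a,b$ and is free abelian of rank $2$ --- more precisely $|G':{(G')}^{\,q}| = $ (something) and $a^iG'\ne b^jG'$-type conditions forcing no torsion and rank exactly $2$; (P4) $G' = \langle c\rangle$ is infinite cyclic, where $c=[a,b]$ (infinite cyclicity of $G'$ follows once $G$ is f.g.\ class-$2$ nilpotent with $G/G'\cong\ZZ^2$, using Lemma \ref{nilpotent commutators} and Lemma \ref{f.g. nilpotent}); and finally (P5) the defining relations of $U$ relative to $a,b,c$: every element equals $a^mb^nc^l$, with $[a,b]=c$, $[a,c]=[b,c]=1$.

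The first block of the argument establishes, in the general case $G$, that $G=\langle a,b\rangle$ has $G/G'$ free abelian of rank $2$ and $G'=Z(G)=\langle c\rangle$ infinite cyclic with $c=[a,b]$. For this I would invoke Lemma \ref{f.g. nilpotent} to know all subgroups of $G$ are f.g., Lemma \ref{f.g. abelian} to decompose $G/G'$ and $G'$, and Lemma \ref{nilpotent commutators} to compute $[a^{m_1}b^{n_1}c^{l_1},a^{m_2}b^{n_2}c^{l_2}]=c^{m_1n_2-m_2n_1}$, exactly as in Lemma \ref{UT center}. The clauses should be chosen so that torsion in $G/G'$ and in $G'$ is ruled out (if $c$ had finite order the formula $\mu$ would eventually fail, and a condition like ``$\mu$ is total and injective as a function of its first argument'' forces $G'$ torsion-free; likewise the rank-$2$ condition on $G/G'$ must exclude an extra free or torsion summand). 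At that point $G$ is the free class-$2$ nilpotent group on $a,b$ modulo possibly identifying $c^N=1$, which we have excluded, so $G\cong F\cong U$ by the fact cited from \cite[Exercise 16.1.3]{Kar.Mer:79}.

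The role of $\mu$ is the crux and the one place the proof genuinely differs from the semidirect-product cases. Once $Z=\langle c\rangle\cong(\ZZ,+)$ is known, $\mu(x,y;a,b)$ defines on $Z$ a binary relation which, because $[a^m,b^n]=[a,b]^{mn}=c^{mn}$, is exactly the graph $\{(c^t,c^{t^2}):t\in\ZZ\}$; hence $(Z,\cdot,M_{a,b})\cong(\ZZ,+,x\mapsto x^2)$, and from squaring plus addition one recovers multiplication, so $(Z,\cdot)$ carries a first-order definable copy of the full ring $(\ZZ,+,\times)$. This is what lets a single sentence be ``categorical among f.g.\ groups'': any extra ambiguity about the structure of $G$ would show up as a deviation in this interpreted copy of arithmetic. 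Concretely I expect to use it to force $G'$ to be \emph{exactly} $\ZZ$ rather than a proper quotient or a group with extra automorphisms, and to tie the ranks together.

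The main obstacle I anticipate is not the centre or the Mal'cev trick --- those are standard once set up --- but writing down clauses that simultaneously (a) are genuinely first-order (e.g.\ ``$G/G'$ is free abelian of rank $2$'' must be replaced by finitely many index conditions $|G':{(G')}^q|$ for a suitable prime $q$, together with $a,b$ generating, exactly as (P4)/(P5) were handled via $|C:C^2|$ and $|A:A^q|$ in Theorems \ref{BS thm}--\ref{Oger thm}), and (b) actually exclude the ``wrong'' f.g.\ class-$2$ nilpotent groups, in particular $U\times(\text{finite})$, $U\times\ZZ$, and the Heisenberg groups $UT_3(\ZZ)$ with a larger centre such as the relative free group modulo $c^k$. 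Verifying each clause holds in $U$ is routine from Lemmas \ref{nilpotent commutators}--\ref{UT center}; the delicate direction is the converse bookkeeping, making sure no residual freedom in $G$ survives all the clauses. I would organize that converse as: first get class-$2$ nilpotent and $G/G'\cong\ZZ^2$ generated by $a,b$; then $G'$ f.g.\ abelian, and the index condition forces $G'\cong\ZZ$; then $\mu$ forces $c=[a,b]$ to generate $G'$ (not a proper power), killing the $c^k=1$ and $U\times\ZZ$ cases; finally the explicit relations identify $G$ with $F\cong U$.
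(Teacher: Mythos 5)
Your overall skeleton (centre $=$ set of commutators, class-$2$ nilpotency, index conditions to control ranks, Mal'cev's $\mu$, and at the end an epimorphism from the free class-$2$ nilpotent group of rank $2$ whose kernel would have to meet the centre) is the same as the paper's, and your final step is exactly the paper's. But at the two places you yourself flag as delicate, the devices you propose do not work, and these are precisely where the paper's proof has content. First, torsion-freeness of $Z=G'$: your suggested clause that ``$\mu$ is total and injective as a function of its first argument'' is false in the standard model $U$ (since $M_{a,b}(c^t)=c^{t^2}=M_{a,b}(c^{-t})$, the square map is not injective), and totality does not exclude finite centres: in a quotient where $Z$ is cyclic of order $N$, squaring mod $N$ is still a perfectly good total function, so such a clause rules nothing out. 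The paper's mechanism is different and is the crux: the formula $\gamma(r,s)$ uses $\mu$ to define the set $P_{r,s}$ of sums of four ``squares'' and demands that $x\le y \leftrightarrow y-x\in P_{r,s}$ be a linear order turning $Z$ into an ordered abelian group with least positive element $[r,s]$; by Lagrange's four-square theorem this holds in $U$, and in any model orderability forces $Z$ torsion-free, after which $|Z:Z^2|=2$ and Lemma \ref{f.g. nilpotent} give $Z\cong\ZZ$. Your alternative remark that the interpreted copy of arithmetic would ``show deviations'' is not a substitute: a single sentence cannot assert that the definable structure on $Z$ is isomorphic to $(\ZZ,+,\times)$; the ordering trick is exactly the workaround.

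Second, generation. Your clause ``$G/G'$ is generated by the images of $a,b$'' is not first-order expressible (it quantifies over words of unbounded length), and finitely many inequalities of the form $a^iG'\ne b^jG'$ cannot force it. The paper needs no generation clause at all: it asserts only $|Z:Z^2|=2$ and $|B:B^2|=4$ for $B=G/Z$, proves $B$ torsion-free from Lemma \ref{nilpotent commutators} (if $u\notin Z$ pick $v$ with $[u,v]\ne1$, then $[u^n,v]=[u,v]^n\ne1$), concludes $Z\cong\ZZ$ and $B\cong\ZZ\oplus\ZZ$, and then \emph{derives} $2$-generation: choosing $c,d$ with $Zc,Zd$ generating $B$ and $g,h$ with $[g,h]$ generating $Z$ (possible since $Z$ is the set of commutators), the computation $[g,h]=[c,d]^{xw-yz}$ together with $[g,h]^r=[c,d]$ and torsion-freeness forces $xw-yz=\pm1$, so $[c,d]$ generates $Z$ and $c,d$ generate $G$. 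So no clauses naming generators $a,b$ or listing the relations of $U$ are needed, and the worries you list (excluding $U\times\ZZ$, $U\times(\text{finite})$, quotients with $c^k=1$) are disposed of by (P1), the two index conditions, and torsion-freeness rather than by extra axioms. In short: same architecture, but the two first-order devices that make it work --- the four-squares ordering of $Z$ and the determinant argument replacing any generation clause --- are missing from your sketch, and the substitutes you propose are either not first-order or fail in the standard model.
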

\begin{proof}
The sentence $\psi_U$ consists of four formulas;
\begin{itemize}
\item[(P1)] the center $Z$ coincides with the set of commutators
\item[(P2)] $\exists r \exists s \ \gamma(r,s)$ where $\gamma(r,s)$ is as described below
\item[(P3)] $|Z:Z^2|=2$
\item[(P4)] $|B:B^2|=4$ where $B=G/Z$.
\end{itemize}
Roughly speaking, $\gamma(r,s)$ says $Z$ is linearly orderable, using Lagrange's theorem: an integer is non-negative iff it is the sum of four squares of integers.
Formally, $\gamma(r,s)$ is a formula expressing
\begin{itemize}
\item $\mu(x,y;r,s)$ defines a unary operation $M_{r,s}$ on $Z$
\item let $P_{r,s}=\{u\ |\ \exists v_1 \ldots \exists v_4 \ u=M_{r,s}(v_1) \circ \ldots \circ M_{r,s}(v_4)\}$. Then $x \le y \leftrightarrow y-x \in P_{r,s}$ defines a linear order which turns $Z$ into an ordered abelian group with $[r,s]$ being the least positive element.
\end{itemize}

Let $G$ be a f.g.\ model of $\psi_U$. Since $Z$ is linearly orderable by (P2), it is torsion-free.
Now we show that $B$ is also torsion-free.
If $u \in G-Z$, there exists $v \in G$ such that $[u,v] \neq 1$.
Then for each positive integer $n$, $[u^n, v]=[u,v]^n\neq 1$ by Lemma \ref{nilpotent commutators}.
Hence $u^n \notin Z$.

Since $G$ is f.g.\ and (class 2) nilpotent by (P1), $Z$ is f.g.\ by Lemma \ref{f.g. nilpotent}.
Also, since $Z=G'$ by (P1), $B=G/Z=G/G'$ is abelian.
So we know that $Z, B$ are both f.g.\ torsion-free abelian and have rank 1,2 respectively by (P3) and (P4)
i.e.~$Z \cong \ZZ$, $B \cong \ZZ \oplus \ZZ$.

Now we show that $G$ is generated by two elements.
Let $c,d \in G$ such that the cosets $Zc,Zd$ generate $B$, and let $g,h \in G$ such that the commutator $[g,h]$ generates~$Z$.
Then, there exist $x,y,z,w \in \ZZ$ and $u,v \in Z$ such that $g=uc^xd^y$ and $h=vc^zd^w$.
Hence $[g,h]=[c^xd^y,c^zd^w]=[c,d]^{xw-yz}$ by Lemma \ref{nilpotent commutators}.
But also $[g,h]^r=[c,d]$ for some $r \in \ZZ$ because $Z$ is generated by $[g,h]$.
Since $Z$ is torsion-free, it follows that $xw-yz=r=\pm1$.
Thus $[c,d]$ also generates $Z$ and so the two elements $c,d$ generate $G$.

Because $U$ is the free class 2 nilpotent group of rank 2, there exists an epimorphism $h:U \rightarrow G$ mapping $a,b$ to $c,d$ respectively.
If $h$ is not $1-1$, then $\mathtt{Ker}(h)$ is non-trivial and so it must intersect $Z(U)$ non-trivially as $U$ is nilpotent, \mbox{by \cite[Thm.\ 16.2.3]{Lang:84}}.
But this is impossible, because $h([a,b])=[c,d]$ and so $h$ induces an isomorphism $Z(U) \rightarrow Z(G)$.
Hence $h$ is $1-1$, or equivalently, $h$ is itself an isomorphism.
\end{proof}

\section{Polylogarithmic compressibility} \label{PLC}
As an analogue of quasi-finite axiomatizability, we define \emph{polylogarithmic compressibility} below as a property of a class of finite groups, that the groups can be described by ``short'' first-order sentences in the sense described below.
It makes sense to define it as a property of a class of groups rather than a single group, because the length of the sentence is always constant (and so cannot be compared to the size of the group) if we have only one group.

We define the length $|\psi|$ of a first-order formula $\psi$ to be the number of symbols used in $\psi$.
We assume we have infinitely many variables and so each variable is counted as one symbol.
It usually reduces the length of each (sufficiently long) formula by the factor of $O(\mathtt{log} \ n)$ where $n$ is the number of variables used in the sentence.
This is because, if we have only finitely many variables, then (when $n$ is sufficiently large) the variables in the sentence require extra indices, which have length $O(\mathtt{log} \ n)$.
It can be avoided in some cases by repeating the same variables. e.g.\ the sentence
\[
\forall x_1 \forall x_2 [x_1,x_2]=1 \rightarrow \exists x_3 \exists x_4 [x_3,x_4]=1
\]
is equivalent to
\[
\forall x \forall y [x,y]=1 \rightarrow \exists x \exists y [x,y]=1.
\]

\begin{definition} \label{PLC def}
A class $\mathcal{C}$ of finite groups is  \emph{polylogarithmically compressible (PLC)} if for any $H \in \mathcal{C}$, there exists a first-order sentence $\psi_H$ such that $H \models \psi_H$, $|\psi_H| = O(\mathtt{log}^k|H|)$ for some fixed $k$, and if $G \models \psi_H$ then $G \cong H$.
In particular, we say $\mathcal{C}$ is \emph{logarithmically-compressible (LC)} if $k = 1$.
\end{definition}

Since we allow the polynomial change in the length, PLCness is independent of the particular way we define first-order language.
For example, it does not matter whether we use parentheses or Polish notation (which allows us to write parenthesis-free formulas without ambiguity).

Here we give an example of an LC class to illustrate the definition, namely the cyclic groups of order $2^n$.
The sentence describing $\ZZ_{2^n}$, written additively, consists of three formulass; $\psi \equiv \forall x[\psi_1 \wedge \psi_2 \wedge \psi_3]$ where
\[
\begin{split}
\psi_1(x) &\equiv \forall y [2y \neq x] \ \vee \ \exists z \exists w [(2z=x) \wedge (2w=x) \wedge \forall t [(2t=x) \rightarrow (t=z \vee t=w)]]\\
\psi_2(x) &\equiv \neg \exists x_2 \ldots \exists x_{n+1}  \left[2x=x_2 \wedge \bigwedge_{2 \leq i < n+1} 2x_i=x_{i+1}\wedge x_{n+1} \neq 0\right]\\
\psi_3(x) &\equiv \exists x_1 \ldots \exists x_n \left[\bigwedge_{1 \leq i < n} 2x_i=x_{i+1}\wedge x_n \neq 0 \right].
\end{split}
\]
Note that each part has length $O(n)$.

The first formula $\psi_1$ says that for each element $x$ of the group, either no element $y$ satisfies $2y=x$, or there are exactly 2 such $y$.
This is true in $\ZZ_{2^n}$ because, if $x$ is odd then no $y$ satisfies $2y=x$, and if $2m=x$ for some $m$ in $\ZZ$ then precisely $y_1=m$ and $y_2=m+2^{n-1}$ satisfy the equation in $\ZZ_{2^n}$.

The next formula says $2^n x=0$ for any element $x$ (i.e.\ every element has order $2^i$ where $i \le n$), and the last formula says there exists an element $x_1$ such that $2^{n-1} x_1 \neq 0$.
Clearly both of them hold in $\ZZ_{2^n}$.

Now let $G$ be a group written additively such that $G \models \psi$.
Then since $0 \in G$ and $0+0=0$, there exists exactly one element of order $2^1$ from $\psi_1$.
Similarly, it can be shown that $G$ has at most $2^{i-1}$ elements of order $2^i$ for each $i$.
Since every element of $G$ has order $2^i$ for some $i \le n$ from $\psi_2$, the maximum number of elements $G$ can have is $1+\sum_{1 \le i \le n} 2^{i-1}=2^n$.
But there exists an element of order $2^n$ from~$\psi_2,\psi_3$ and so the cyclic subgroup generated by this element must coincide with the whole group $G$.
In other words, $G \cong \ZZ_{2^n}$.

In this section, we give more examples of PLC and LC classes.
The proofs follow the scheme described below except for the last example:
\begin{itemize}
\item[(i)] We give a presentation for the group $H$ so that if $G \models \psi_H$, then $G$ contains a subgroup $\tilde{G}$ isomorphic to some factor of $H$.
\item[(ii)] We express that the generators of $\tilde{G}$ generate the whole group $G$.
\item[(iii)] We express that $\tilde{G} \cong H$.
\end{itemize}
The following lemmas are used repeatedly.

\begin{lemma} \label{presentation}
Given a finite presentation for a group $H$ with generators $a_1,\ldots,a_m$, there exists a first-order formula $\zeta(x_1,\ldots,x_m)$ such that $H \models \zeta(a_1,\ldots,a_m)$, and if $G \models \zeta(b_1,\ldots,b_m)$ for some group $G$ and its elements $b_1,\ldots,b_m$, then the subgroup $\langle b_1,\ldots,b_m \rangle$ of $G$ is isomorphic to $H/N$ for some normal subgroup $N$ of~$H$.
\end{lemma}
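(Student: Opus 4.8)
The plan is to let $\zeta$ be nothing more than the conjunction of the defining relators, read as equations in the language of groups. Writing the given finite presentation as $H = \langle a_1,\ldots,a_m \mid r_1,\ldots,r_k\rangle$, where each relator $r_j$ is a word $r_j(X_1,\ldots,X_m)$ in the free group on $X_1,\ldots,X_m$, I would set
\[
\zeta(x_1,\ldots,x_m) \equiv \bigwedge_{j=1}^{k} \bigl(r_j(x_1,\ldots,x_m) = 1\bigr).
\]
This is a quantifier-free formula, since each $r_j(x_1,\ldots,x_m)$ is a group term built from the variables $x_i$ using multiplication and inversion. As the generators $a_i$ satisfy the relators in $H$ by the very definition of a presentation, $H \models \zeta(a_1,\ldots,a_m)$ is immediate.

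For the converse, suppose $G \models \zeta(b_1,\ldots,b_m)$. Let $F = F(X_1,\ldots,X_m)$ and let $\pi \colon F \to G$ be the unique homomorphism with $\pi(X_i) = b_i$, so that $\range(\pi) = \langle b_1,\ldots,b_m\rangle$. Since $G$ satisfies each equation $r_j(b_1,\ldots,b_m)=1$, every $r_j$ lies in $\ker\pi$, and hence so does $\widehat N$, the least normal subgroup of $F$ containing $\{r_1,\ldots,r_k\}$. By the definition of a presentation in Section \ref{presentations}, $F/\widehat N \cong H$, so $\pi$ factors through the quotient, yielding a homomorphism $\bar\pi \colon H \to G$ with $\range(\bar\pi) = \langle b_1,\ldots,b_m\rangle$. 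Taking $N = \ker\bar\pi \trianglelefteq H$, the first isomorphism theorem gives $\langle b_1,\ldots,b_m\rangle \cong H/N$, as claimed.

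I do not expect a genuine obstacle here: the only point needing (routine) care is checking that a relator word is a bona fide group term, so that $\zeta$ is a legitimate first-order formula; this is clear once we recall that inversion is available (or first-order definable) in the language of groups. It is also worth recording, for the later PLC and LC estimates, that $|\zeta| = O\bigl(\sum_j |r_j|\bigr)$, so that the formula has length linear in the total relator length — but this plays no role in the statement proved here.
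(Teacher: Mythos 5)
Your proposal is correct and takes essentially the same route as the paper: you use the identical formula $\zeta \equiv \bigwedge_j r_j(x_1,\ldots,x_m)=1$, and your verification that $\langle b_1,\ldots,b_m\rangle \cong H/N$ is just a more explicit rendering (via factoring the free-group epimorphism through $H$ and the first isomorphism theorem) of the paper's observation that this subgroup admits a presentation with the given relators plus possibly additional ones. No gap; the von Dyck--style argument you spell out is exactly what the paper's proof is implicitly using.
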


Of course, this lemma is used in the part (i) of the scheme.
The length of the formula $\zeta$ depends on the length of the relators.

\begin{proof}
Let $P=\langle~a_1,\ldots,a_m~|~t_1,\ldots,t_n~\rangle$ be a presentation for $H$.
Note that each relator $t_i$ is first-order definable with parameters $a_1,\ldots,a_m$ since it is a product of the gerenators and their inverses \mbox{(i.e.~$t_i=t_i(a_1,\ldots,a_m)$)}.
Then the formula is
\[
\zeta(x_1,\ldots,x_m) \equiv \bigwedge_{1 \le i \le n} t_i(x_1,\ldots,x_m)=1.
\]
If $G \models \zeta(b_1,\ldots,b_m)$ for some group $G$ and its elements $b_1,\ldots,b_m$, then the subgroup $\tilde{G}=\langle b_1,\ldots,b_m \rangle$ of $G$ has a presentation
\[\langle~x_1,\ldots,x_m~|~t_1,\ldots,t_n, u_1,\ldots~\rangle\]
where each $x_j$ corresponds to $b_j$, and $t_i=t_i(x_1,\ldots,x_m)$, $u_k=u_k(x_1,\ldots,x_m)$ for each $i,k$. Hence $\tilde{G} \cong H/N$ where $N$ is the normal subgroup of $H$ generated \mbox{by $\{u_k(a_1,\ldots,a_m)~|~1 \le k\}$.}
In particular, if $N$ is trivial then $\tilde{G} \cong H$.
\end{proof}

\begin{lemma} \label{repeated squaring}
For each positive integer $n$, there exists a first-order formula $\theta_n(x,y)$ of length $O(\mathtt{log}\ n)$ such that $G \models \theta_n(x,y)$ iff $x^n=y$ in the group $G$.
\end{lemma}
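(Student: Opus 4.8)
The plan is to express $x^n = y$ using the binary expansion of $n$, implementing repeated squaring so that the number of variables (and hence the length, accounting for the variable indices) is $O(\mathtt{log}\ n)$ rather than $O(n)$. Write $n$ in binary as $n = \sum_{i=0}^{\ell} \epsilon_i 2^i$ with $\epsilon_\ell = 1$, so $\ell = O(\mathtt{log}\ n)$. First I would introduce variables $z_0, z_1, \ldots, z_\ell$ intended to denote the successive squares $x, x^2, x^4, \ldots, x^{2^\ell}$, pinned down by the conjunction $z_0 = x \wedge \bigwedge_{0 \le i < \ell} z_{i+1} = z_i z_i$. Then I would introduce variables $w_0, \ldots, w_\ell$ (or accumulate in place) intended to denote the partial products $\prod_{j \le i,\ \epsilon_j = 1} x^{2^j}$, pinned down by $w_0 = z_0^{\epsilon_0}$ (meaning $w_0 = z_0$ if $\epsilon_0 = 1$ and $w_0 = 1$ if $\epsilon_0 = 0$, where "$=1$'' abbreviates "is the identity''), and $\bigwedge_{0 \le i < \ell}$ either $w_{i+1} = w_i z_{i+1}$ (if $\epsilon_{i+1} = 1$) or $w_{i+1} = w_i$ (if $\epsilon_{i+1} = 0$). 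Finally append the clause $w_\ell = y$ and existentially quantify all the auxiliary variables, giving
\[
\theta_n(x,y) \equiv \exists z_0 \cdots \exists z_\ell\, \exists w_0 \cdots \exists w_\ell\ [\cdots].
\]
Since the bracketed matrix is a conjunction of $O(\ell)$ equations, each of bounded size but each naming variables whose indices have length $O(\mathtt{log}\ \ell) = O(\mathtt{log}\ \mathtt{log}\ n)$, the total length is $O(\mathtt{log}\ n \cdot \mathtt{log}\ \mathtt{log}\ n)$ under the strict symbol-counting convention; but the excerpt's convention counts each variable as a single symbol, so the length is genuinely $O(\mathtt{log}\ n)$ as claimed.

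The correctness argument is a routine induction: in any group $G$, if the $z_i$ satisfy their defining clauses then $z_i = x^{2^i}$ for all $i \le \ell$, and if the $w_i$ satisfy theirs then $w_i = x^{\sum_{j \le i,\ \epsilon_j = 1} 2^j}$, so $w_\ell = x^n$; hence $G \models \theta_n(x,y)$ forces $x^n = y$, and conversely the obvious assignment witnesses $\theta_n$ whenever $x^n = y$. One should also handle negative $n$ (if that is intended) by the same construction applied to $x^{-1}$, or by an extra clause $y' = xy \wedge y' = 1$ type trick; and $n = 0$ trivially by $\theta_0(x,y) \equiv (y = y \wedge y\cdot y^{-1} = y')$ — more simply, $\theta_0(x,y) \equiv \forall w\,(wy = w)$, i.e. $y$ is the identity.

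I do not anticipate a serious obstacle here — the construction is the standard fast-exponentiation trick transported into first-order logic — but the one point requiring care is the bookkeeping on length under whichever symbol-counting convention is in force, making sure that the indices on the $O(\mathtt{log}\ n)$ quantified variables do not secretly cost an extra logarithmic factor. Under the paper's stated convention (each variable is one symbol) this is immediate; if one instead insists on a fixed finite alphabet, one notes that the bound degrades only to $O(\mathtt{log}\ n\, \mathtt{log}\mathtt{log}\ n)$, which is still $O(\mathtt{log}^k n)$ for the purposes of Definition~\ref{PLC def}, and in fact one can recover the clean $O(\mathtt{log}\ n)$ bound by reusing a bounded pool of variable names across the conjuncts, exactly as in the $\ZZ_{2^n}$ example above. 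The formula $\theta_n$ will then be used as a black box to abbreviate "$x^n = y$'' (and, by quantifying out $y$, "$x^n = 1$'', i.e. the order of $x$ divides $n$) inside the longer sentences constructed in the subsequent examples.
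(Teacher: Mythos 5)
Your construction is correct and is essentially the paper's own argument: both encode $x^n=y$ by repeated squaring along the binary expansion of $n$, using $O(\mathtt{log}\ n)$ existentially quantified variables, a conjunction of $O(\mathtt{log}\ n)$ constant-size equations, and an induction on the bit-length for correctness. The only difference is cosmetic --- the paper processes the bits most-significant-first in a single Horner-style chain $y_{i+1}=y_i\cdot y_i\cdot x^{\alpha_{i+1}}$, whereas you precompute the squares $z_i=x^{2^i}$ and accumulate in separate variables $w_i$ --- and your remarks on the symbol-counting convention, $n=0$, and negative exponents are fine but not needed, since the paper counts each variable as one symbol and the lemma only concerns positive $n$.
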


The method used here is called \emph{repeated squaring}.
The formulas $\psi_2, \psi_3$ in the example above are also using this technique.

\begin{proof}
Let $n=\alpha_1\ldots\alpha_k$ written in binary where $k=\lfloor \mathtt{log_2}\ n \rfloor$.
Then the formula $\theta_n$ is
\[
\theta_n(x,y) \equiv \exists y_1 \ldots \exists y_k \left[y_1=x \ \wedge \ y_k=y \ \wedge \ \bigwedge_{1 \le i < k} y_{i+1}=y_i \cdot y_i \cdot x^{\alpha_{i+1}}\right]
\]
where $x^{\alpha_i}=x$ if ${\alpha_i}=1$ and $x^{\alpha_i}=1_G$ if ${\alpha_i}=0$. Clearly $\theta_n$ has length $O(\mathtt{log}\ n)$.

Now we show that the formula is correct, by induction on $k$.
If $k=1$, then the only possibility is $n=1$ and correctness is obvious because the formula is reduced to ${\theta_1(x,y) \equiv \exists y_1 [x=y_1=y]}$.
Suppose $\theta_n(x,y)$ is correct for all $n < 2^{k}$ for some~$k$.
Let $N \in \NN$ such that $2^{k} \le N < 2^{k+1}$ and let $N=\beta_1 \ldots \beta_k$ written in binary.
Then,
\[
\begin{split}
\theta_N(x,y)&\equiv \exists y_1 \ldots \exists y_{k} \left[\bigwedge_{1 \le i < k} y_{i+1}=y_i \cdot y_i \cdot x^{\alpha_{i+1}} \ \wedge \ y_1=x \ \wedge \ y_{k}=y\right]\\
&\equiv \exists y_k \left[\theta_{\tilde{N}}(x,y_{k-1}) \ \wedge \ y_k=y_{k-1} \cdot y_{k-1} \cdot x^{\beta_k} \ \wedge \ y_k=y \right]
\end{split}
\]
where $\tilde{N}=\beta_1 \ldots \beta_{k-1}$.
If $\theta_N(x,y)$ holds in $G$ with witnesses $y_1,\ldots,y_k$, then we have ${y_{k-1}=x^{\tilde{N}}}$ by the inductive hypothesis because $\tilde{N} < 2^k$.
Since $N = 2\tilde{N}+{\beta_k}$, it follows that $y_k=y_{k-1} \cdot y_{k-1} \cdot x^{\beta_k}=x^{2\tilde{N}+{\beta_k}}=x^N$, as required.
\end{proof}

\begin{lemma} \label{finite product}
Given a generating set $S$ of a finite group $G$, every element of $G$ can be written as a product of at most $|G|$ generators in $S$.
\end{lemma}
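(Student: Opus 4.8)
The plan is to argue by minimality of the expression length together with the pigeonhole principle applied to partial products. Fix $g \in G$. Since $S$ generates $G$ and $G$ is finite, $g$ is in fact a product of elements of $S$ (using no inverses): each $s \in S$ satisfies $s^{-1} = s^{\mathrm{ord}(s)-1}$, so $\langle S \rangle = G$ coincides with the subsemigroup generated by $S$, and hence $g = s_1 s_2 \cdots s_k$ for some $s_1,\ldots,s_k \in S$. Among all such expressions for $g$, I would choose one with $k$ minimal (taking $k = 0$, the empty product, when $g = 1_G$).

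Next, consider the partial products $p_0 = 1_G$ and $p_j = s_1 \cdots s_j$ for $1 \le j \le k$, so that $p_k = g$; this is a list of $k+1$ elements of $G$. Suppose toward a contradiction that $k \ge |G|$. Then these $k+1 > |G|$ elements cannot be pairwise distinct, so $p_i = p_j$ for some $0 \le i < j \le k$, which forces $s_{i+1}\cdots s_j = p_i^{-1}p_j = 1_G$. Therefore $g = p_k = (s_1 \cdots s_i)(s_{i+1}\cdots s_j)(s_{j+1}\cdots s_k) = (s_1 \cdots s_i)(s_{j+1}\cdots s_k)$, an expression of $g$ as a product of $k - (j-i) < k$ generators from $S$, contradicting the minimality of $k$. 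Hence $k \le |G| - 1 \le |G|$.

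Thus every $g \in G$ can be written as a product of at most $|G|$ (indeed at most $|G|-1$) generators from $S$, which is the claim. I do not anticipate a genuine obstacle; the only point deserving a sentence is the reduction noted above to products involving no inverses, so that the statement is literally about products of elements of $S$. An alternative route giving the same bound is to observe that the ascending chain $\{1_G\} = T_0 \subseteq T_1 \subseteq \cdots$, where $T_{i+1} = T_i \cup T_i S$ is the set of all products of at most $i+1$ generators from $S$, is strictly increasing until it stabilizes; since $|T_0| = 1$, it must stabilize — and hence equal $G$ — by step $|G| - 1$.
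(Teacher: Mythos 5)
Your proposal is correct and takes essentially the same approach as the paper: write $g$ as a product of elements of $S$, apply the pigeonhole principle to the partial products, and delete the segment between two equal partial products to shorten the expression. The only differences are cosmetic --- the paper iterates the shortening step rather than invoking minimality of $k$, and it leaves implicit the reduction to inverse-free products (via $s^{-1}=s^{\mathrm{ord}(s)-1}$) that you spell out.
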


This lemma, combined with the next one, is used in the part (ii) of the scheme.
The basic idea of the proof is the pigeonhole principle.

\begin{proof}
Let $S=\{s_1,\ldots,s_n\}$ be a generating set of $G$.
Then, each element $g \in G$ can be written as a product
\[
g=\prod_{1 \le i \le m}t_i
\]
for some $m$ where $t_i \in S$ for each $i$.
If $m > |G|$, then there exist $j,k \in \NN$ with $j < k \le m$ and
\[
\prod_{1 \le i \le j}t_i = \prod_{1 \le i \le k}t_i
\]
and so $g$ can also be written as
\[
g=\left(\prod_{1 \le i \le j}t_i\right) \cdot \left(\prod_{k < i \le m}t_i\right)
\]
which is a product of $m-(k-j)$ generators.

We can repeat the same procedure until $g$ is written as a product of no more than $|G|$ generators.
\end{proof}

\begin{lemma} \label{generation}
Let $G$ be a finite group.
Then for each positive integer $n$, there exists a first-order formula $\pi_n(g;x_1,\ldots,x_n)$ with parameters $x_1,\ldots,x_n$ of length $O(n+\mathtt{log}|G|)$ such that ${G \models \pi_n(g;x_1,\ldots,x_n)}$ iff ${g \in \langle x_1,\ldots,x_n \rangle}$.
In other words, $\pi_n$ defines the subgroup ${\langle x_1,\ldots,x_n \rangle}$ of $G$.
\end{lemma}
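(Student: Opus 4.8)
The plan is to express ``$g \in \langle x_1,\ldots,x_n\rangle$'' by saying that $g$ is a product of at most $|G|$ factors, each of which is one of $x_1,\ldots,x_n$, one of their inverses, or the identity. By Lemma~\ref{finite product}, applied to the finite group $\langle x_1,\ldots,x_n\rangle$ with generating set $\{x_1,\ldots,x_n\}$, every element of this subgroup is such a product, and since $|\langle x_1,\ldots,x_n\rangle| \le |G|$ the bound $|G|$ on the number of factors suffices; conversely any such product obviously lies in $\langle x_1,\ldots,x_n\rangle$. The difficulty is purely one of length: the obvious formula $\exists y_1 \cdots \exists y_{|G|}$ followed by an assertion that $g = y_1 \cdots y_{|G|}$ and that each $y_j$ is among the $x_i^{\pm 1}$ has length of order $n \cdot |G|$, and even the naive recursion $\rho_{k+1}(g) \equiv \exists u \exists v\,[\,g = uv \wedge \rho_k(u) \wedge \rho_k(v)\,]$ is no help, since it recopies the subformula $\rho_k$ at every level and so again has length of order $n \cdot 2^k$.

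To avoid this, I would first introduce the auxiliary formula
\[
\delta(y) \equiv (y = 1) \vee \bigvee_{1 \le i \le n}\bigl(y = x_i \vee y \cdot x_i = 1\bigr),
\]
of length $O(n)$, which holds exactly when $y$ is the identity or one of the $x_i^{\pm 1}$. Then I would define, by recursion on $k$, a formula $\rho_k(g)$ with parameters $x_1,\ldots,x_n$ asserting that $g$ is a product of $2^k$ factors each satisfying $\delta$; since $1$ satisfies $\delta$, this amounts to saying that $g$ is a product of at most $2^k$ such factors. The base case is $\rho_0(g) \equiv \delta(g)$, and the recursion step uses the standard device of collapsing the two occurrences of $\rho_k$ into a single one by quantifying over a fresh auxiliary variable:
\[
\rho_{k+1}(g) \equiv \exists u \exists v\,\Bigl[\,g = u \cdot v \ \wedge\ \forall w\bigl((w = u \vee w = v) \to \rho_k(w)\bigr)\Bigr],
\]
with fresh $u, v, w$ at each level. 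Now $|\rho_{k+1}| = |\rho_k| + O(1)$, and the whole formula uses $O(n)$ parameter symbols together with $O(k)$ bound variables, each counted once, so $|\rho_k| = O(n + k)$. Taking $K = \lceil \mathtt{log_2}|G| \rceil$ and letting $\pi_n(g; x_1,\ldots,x_n)$ be $\rho_K(g)$ gives a formula of length $O(n + \mathtt{log}|G|)$.

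It then remains to verify correctness, which is routine. If $G \models \pi_n(g; x_1,\ldots,x_n)$, then $g$ is a product of finitely many elements each equal to some $x_i$, $x_i^{-1}$, or $1$, hence $g \in \langle x_1,\ldots,x_n\rangle$. Conversely, if $g \in \langle x_1,\ldots,x_n\rangle$, then by Lemma~\ref{finite product} (applied to that subgroup) $g$ is a product of at most $|\langle x_1,\ldots,x_n\rangle| \le |G| \le 2^K$ of the generators $x_i$; padding with copies of $1$ to length exactly $2^K$ and splitting the resulting product in half repeatedly shows, by induction on $k$, that $G \models \rho_K(g)$. The one genuinely essential point is the one already isolated: quantifying over the universal variable $w$ so that the recursion never recopies its body, which is exactly what makes $|\pi_n|$ logarithmic rather than linear in $|G|$.
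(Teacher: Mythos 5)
Your proposal is correct and follows essentially the same route as the paper: the same quantifier trick of replacing the two recursive occurrences by a single one via $\forall w\,[(w=u \vee w=v) \to \rho_k(w)]$, the same base formula of length $O(n)$, and the same appeal to Lemma~\ref{finite product} with $k=\lceil \mathtt{log_2}|G|\rceil$ to bound the recursion depth. The only differences (allowing inverses in the base case and padding with the identity) are cosmetic and do not change the argument.
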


As mentioned above, this lemma is usually used in the part (ii) of the scheme.
A modified version of the formula is also used in the proof of Theorem \ref{abelian thm} to define a subset of the group consisting of some powers of a certain element.

\begin{proof}
We use a device that originated in computational complexity to show that the set of true quantified boolean formulas is PSPACE complete \cite[Thm 8.9]{Sipser:97}. 
We define the formulas $\delta_i(g;x_1,\ldots,x_n)$ with parameters $x_1,\ldots,x_n$ for each $i \in \NN$ inductively.
For $i=0$,
\[
\delta_0(g;x_1,\ldots,x_n) \equiv \bigvee_{1 \le j \le n}g=x_j \ \vee \ g=1
\]
and for $i>0$,
\[
\begin{split}
\delta_i(g;x_1,\ldots,x_n) \equiv \exists u_i \exists v_i [&g = u_i v_i \ \wedge\\
&\forall w_i [(w_i = u_i \vee w_i = v_i) \rightarrow \delta_{i-1}(w_i;x_1,\ldots,x_n)]].
\end{split}
\]
Note $\delta_i$ has length $O(n+i)$, and $G \models \delta_i(g;x_1,\ldots,x_n)$ iff $g$ can be written as a product of at most $2^i$ $x$'s.

Now let $\tilde{G}=\langle x_1,\ldots,x_n \rangle$.
Then by Lemma \ref{finite product}, each $g \in \tilde{G}$ can be written as a product of at most $|\tilde{G}|$ generators of $\tilde{G}$ (i.e.\ $x_1,\ldots,x_n$).
Hence by defining $\pi_n(g;x_1,\ldots,x_n) \equiv \delta_k(g;x_1,\ldots,x_n)$ where $k = \lceil \mathtt{log_2}|G| \rceil$ (and so $2^k \ge |G| \ge |\tilde{G}|$), we get the required formula.
\end{proof}

\subsection{Simple groups} \label{simple}
It is known that finite simple groups can be classified into 18 infinite families, with exceptions of 26 so-called sporadic groups.

In \cite{Babai:97}, L.~Babai {\it et al.}\ showed that all finite simple groups have `short' presentations, with possible exception of the three families: the projective special unitary groups $PSU_3(q) = {^2A_2(q)}$ where $q$ is a prime-power, the Suzuki groups $Sz(q)={^2B_2(q)}$ where $q=2^{2e+1}$ for some positive integer $e > 1$, and the Ree groups $R(q)={^2G_2(q)}$ where $q=3^{2e+1}$ for some positive integer $e > 1$.
They defined the length $l(P)$ of a presentation $P$ to be the number of characters required to write all the relations (or equivalently relators) in $P$, where the exponents are written in binary, and proved that each of these groups have a presentation of length $O(\mathtt{log}^2|G|)$ where $|G|$ is the size of the group.
Note that $l(P)$ is the maximum number of generators in $P$ the relations can `talk' about.
Since each generator must appear in the relations at least once (otherwise it will have infinite order), this means the number of generators in $P$ is also $O(\log^2|G|)$.

Among the families they missed, two of them were shown to have short presentations by other people.
One is $PSU_3(q)$, shown by Hulpke and Seress \cite{Hulpke:01}.
Given a finite field $F_{q^2}$ for some prime-power $q$, an order 2 automorphism $\alpha$ of $F_{q^2}$ can be defined by $x \mapsto x^q$ and it can be extended in the natural way to the (multiplicative) groups of matrices over $F_{q^2}$.
The {\it special unitary group} $SU_3(q)$ is
\[
SU_3(q) = \{A \in SL_3(q^2) \ | \ A \omega \overline{A}^T = \omega\}
\]
where $\overline{A}=A^{\alpha}$ and $\omega =
\left(
\begin{array}{@{}ccc@{}}
0 & 0 & 1 \\
0 & 1 & 0 \\
1 & 0 & 0
\end{array}
\right)$, and the {\it projective special unitary group} $PSU_3(q)$ is the factor of $SU_3(q)$ by its center.

The other family shown to have short presentations is the Suzuki groups.
In fact, the presentation was given in the original paper by Suzuki \cite{Suzuki:62}, and that was observed by J.~Thompson (personal communication to W.~Kantor) according to Hulpke and Seress \cite{Hulpke:01}.

We use these results to prove the following theorem.

\begin{theorem}
The class of finite simple groups, excluding the family of the Ree groups $R(q)={^2G_2(q)}$, is PLC.
\end{theorem}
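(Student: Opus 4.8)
The plan is to run the three–step scheme (i)--(iii) set up before Lemma \ref{presentation}, feeding it the \emph{short presentations} for finite simple groups. Fix a finite simple group $H$ that is not a Ree group ${}^2G_2(q)$. By the theorem of Babai et al.\ \cite{Babai:97}, together with Hulpke--Seress \cite{Hulpke:01} for the family $PSU_3(q)$ and Suzuki \cite{Suzuki:62} (as observed by Thompson) for the Suzuki groups $Sz(q)$, there is a presentation $P=\langle a_1,\dots,a_m\mid t_1,\dots,t_n\rangle$ of $H$ whose length $l(P)$ — the number of characters needed to write the relators with exponents in binary — is $O(\log^2|H|)$; as noted in the excerpt this also forces $m=O(\log^2|H|)$, since every generator must occur in some relator. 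The $26$ sporadic groups, and any finitely many small members of the families not covered by the above, can be dealt with separately: a class with only finitely many members is trivially PLC (even a Cayley-table sentence has bounded length), and these bounds are all $O(\log^2|H|)$.

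For step (i) I would convert $P$ into a first-order formula $\zeta(x_1,\dots,x_m)$ as in Lemma \ref{presentation}, except that each maximal power $a_j^{k}$ occurring in a relator (with $k$ in binary) is replaced, using Lemma \ref{repeated squaring}, by an application of $\theta_{k}$ to a fresh existentially quantified variable, with a final clause equating the product of the resulting values to $1$. This makes the contribution of each relator proportional to its length in $P$, so $|\zeta|=O(l(P)+m)=O(\log^2|H|)$. By Lemma \ref{presentation}, if $G\models\zeta(b_1,\dots,b_m)$ then $\langle b_1,\dots,b_m\rangle\cong H/N$ for some $N\trianglelefteq H$, and since $H$ is simple this quotient is either $H$ itself or trivial; in both cases $|\langle b_1,\dots,b_m\rangle|\le|H|$, which I use next.

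For steps (ii) and (iii) I would set $k=\lceil\log_2|H|\rceil$, let $\delta_k(g;x_1,\dots,x_m)$ be the bounded-product formula built in the proof of Lemma \ref{generation} (length $O(m+k)=O(\log^2|H|)$, holding in a finite group iff $g$ is a product of at most $2^k$ of the $x_j$'s), and take
\[
\psi_H\;\equiv\;\exists x_1\cdots\exists x_m\bigl[\,\zeta(x_1,\dots,x_m)\wedge\forall g\,\delta_k(g;x_1,\dots,x_m)\,\bigr]\;\wedge\;\exists y\,(y\neq 1).
\]
The key observation is that whenever $\zeta(b_1,\dots,b_m)$ holds we have $|\langle b_1,\dots,b_m\rangle|\le|H|\le 2^k$, so by Lemma \ref{finite product} the clause $\forall g\,\delta_k(g;b_1,\dots,b_m)$ says exactly that $b_1,\dots,b_m$ generate the whole group $G$. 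Combined with $\zeta$ this gives $G\cong H/N$, and the nontriviality clause forces $N=\{1\}$, hence $G\cong H$. Conversely $H\models\psi_H$ by taking the $x_j$ to be the $a_j$. Adding up lengths gives $|\psi_H|=O(\log^2|H|)$, so the class is PLC with exponent $2$.

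I expect the only real work to be the length bookkeeping in step (i): translating a presentation whose relators carry \emph{binary} exponents into a genuine first-order formula must not blow up the length, which is precisely what repeated squaring (Lemma \ref{repeated squaring}) is for, but it requires introducing and quantifying one auxiliary variable per power and checking that the total stays $O(\log^2|H|)$. All the conceptual content is outsourced to the classification and the cited presentation-length theorems; correspondingly, the single genuine hypothesis is the availability of short presentations, which is exactly why the family of Ree groups $R(q)={}^2G_2(q)$ — the one family for which no short presentation is known — has to be excluded from the statement.
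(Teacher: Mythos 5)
Your proposal is correct and follows essentially the same route as the paper: encode the short presentation (Babai et al., Hulpke--Seress, Suzuki) via repeated squaring as in Lemma \ref{repeated squaring}, force generation of the whole group with the $\delta_k$/$\pi_k$ device of Lemmas \ref{finite product} and \ref{generation}, and use simplicity plus a nontriviality clause to rule out the trivial quotient. The only (harmless) differences are cosmetic: you assert $\exists y\,(y\neq 1)$ for $G$ where the paper asserts $a_1\neq 1$ for a witness generator, and your explicit aside about the finitely many sporadic groups is not needed since they are covered by the cited presentation results.
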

\begin{proof}
Let $H$ be a finite simple group not belonging to the family $R(q)$ and let $P=\langle~a_1,\ldots,a_m~|~t_1,\ldots,t_n~\rangle$ be a presentation for $H$ with $l(P)=O(\mathtt{log}^2|H|)$.
Then the sentence describing $H$ is $\psi \equiv \exists a_1 \ldots \exists a_m [\psi_1 \wedge \psi_2 \wedge \psi_3]$ where formulas $\psi_1,\psi_2,\psi_3$ correspond to (i),(ii),(iii) in the scheme respectively.

First, we show that $\psi_1$, the `presentation' for the group (which corresponds to the formula $\zeta$ in Lemma \ref{presentation}), has length $O(\mathtt{log}^2|H|)$.
It suffices to show that we can write each relator in appropriate length.
If $t=a_{\phi(1)}^{z_1} \ldots a_{\phi(k)}^{z_k}$ is a relator, where each $a_{\phi(i)}$ is a generator in $P$, then the number of characters required to write $t$ is $l(t)=k+\sum_{1 \le i \le k} \lfloor \mathtt{log}_2 z_i \rfloor$.
Since the formula
\[
\tau(a_1,\ldots,a_m) \equiv \exists b_1 \ldots \exists b_k \left[\bigwedge_{1 \le i \le k} \theta_{z_i}(a_{\phi(i)},b_i) \wedge \prod_{1 \le i \le k} b_i=1\right]
\]
expresses $t=1$, and has length $\simeq 5k + 10 \sum_{1 \le i \le k} \lfloor \mathtt{log_2}~z_i \rfloor$ by Lemma \ref{repeated squaring}, we obtain the required result.
I.e.\ the formula
\[
\psi_1(a_1,\ldots,a_m) \equiv \bigwedge_{1 \le j \le n} \tau_j(a_1,\ldots,a_{m})
\]
where each $\tau_j$ corresponds to the relator $t_j$, has length approximately
\[
\sum_{1 \le j \le n} \left[ 5k + 10 \sum_{1 \le i \le k} \lfloor \mathtt{log_2}~z_i \rfloor \right]
\]
where $k$ is dependent on $j$.

The second formula $\psi_2$ expresses that $a_1,\ldots,a_n$ generate $H$, using Lemma \ref{generation};
\[
\psi_2(a_1,\ldots,a_m) \equiv \forall h [\pi_k(h;a_1,\ldots,a_m)]
\]
where $k = \lceil \mathtt{log}|H| \rceil$.
(Recall that $H \models \pi_k(h;a_1,\ldots,a_m)$ iff $h \in \langle a_1,\ldots,a_m \rangle$.)
Clearly $\psi_2$ has length $O(\mathtt{log}|H|)$.

Now let $G$ be a group and let $x_1,\ldots,x_m \in G$ such that $G \models \psi_1 \wedge \psi_2(x_1,\ldots,x_m)$.
Then we know that $G$~is generated by the elements $x_1,\ldots,x_m$ and is isomorphic to some factor group of $H$.
But since $H$ is simple, we must have $G \cong H$ unless $G$~is trivial.
Hence the last formula $\psi_3$ is
\[
\psi_3(a_1,\ldots,a_m) \equiv [a_1 \neq 1]
\]
assuming $a_1$ is not the identity element in $H$.
We can make this assumption safely because if $a_1=1$, then we can get a shorter presentation for $H$ by excluding $a_1$ from $P$.
Clearly $\psi_3$ has constant length and so the length of the whole sentence $\psi$ is $O(\mathtt{log}^2|H|)$.
\end{proof}

\subsection{Symmetric groups} \label{symmetric}
In the previous subsection, it was easy to obtain the formula $\psi_2$ because the groups considered were simple.
Something similar happens for the case of the symmetric groups, because for each $n \ge 5$, the alternating group $A_n$ is the only non-trivial normal subgroup of $S_n$ (see \cite[3.2.3]{Robinson:82}).

In \cite{Bray:11}, Bray {\it et al.}\ found presentations of length $O(\mathtt{log}(n))$ for the symmetric groups $S_n$, one of whose generators corresponds to the $n$-cycle $(1,\ldots,n)$.
They defined the length of a presentation in a slightly different way from Babai {\it et al.}~\cite{Babai:97}, but it does not affect the order of a presentation (they included the number of generators).
Note that their presentation for $S_n$ has length $O(\mathtt{log \ log}|S_n|)$ because $|S_n| = n!$.

\begin{theorem} \label{symmetric thm}
The class of symmetric groups $S_n$ is LC.
\end{theorem}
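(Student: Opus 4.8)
The plan is to run the three-step scheme (i)--(iii) described at the start of this section, supplying it with the short presentations of $S_n$ due to Bray et al.\ together with Lemmas~\ref{presentation}, \ref{repeated squaring}, \ref{finite product} and \ref{generation}. Fix the degree $n$; it suffices to treat $n \ge 5$, since the finitely many groups $S_n$ with $n \le 4$ can each be described by some fixed sentence and so do not affect the $O(\mathtt{log}|S_n|)$ estimate for the class. Let $P = \langle a_1,\ldots,a_m \mid t_1,\ldots,t_r\rangle$ be a presentation of $S_n$ with $m$ and the total relator length both $O(\mathtt{log}\,n)$. The describing sentence will be $\psi_n \equiv \exists a_1 \cdots \exists a_m\,[\psi_1 \wedge \psi_2 \wedge \psi_3]$, where $\psi_1$ asserts the relators (step (i)), $\psi_2$ asserts that $a_1,\ldots,a_m$ generate the whole group (step (ii)), and $\psi_3$ rules out the proper quotients of $S_n$ (step (iii)). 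The target length is $O(\mathtt{log}|S_n|)$, and by Stirling $\mathtt{log}|S_n| = \mathtt{log}(n!) = \Theta(n\,\mathtt{log}\,n)$; this is the budget against which each piece is measured.

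For step (i) I would take $\psi_1(a_1,\ldots,a_m)$ to be the conjunction over the relators $t_j$ of the formulas provided by Lemma~\ref{presentation}, using $\theta_z$ from Lemma~\ref{repeated squaring} to write in binary each exponent occurring in a relator. By those two lemmas $\psi_1$ has length proportional to the presentation length $l(P) = O(\mathtt{log}\,n) = O(\mathtt{log}\,\mathtt{log}|S_n|)$, which is far below the budget; moreover any group $G$ with elements $b_1,\ldots,b_m$ satisfying $\psi_1$ has $\langle b_1,\ldots,b_m\rangle \cong S_n/N$ for some normal subgroup $N$ of $S_n$, so that subgroup has order at most $n!$.

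For step (ii) set $\psi_2(a_1,\ldots,a_m) \equiv \forall h\,\pi_m(h;a_1,\ldots,a_m)$, where $\pi_m$ is the subgroup-membership formula of Lemma~\ref{generation} applied with ambient group $S_n$, so that it expresses ``$h$ is a product of at most $2^k$ of the $a_i$'s'' with $2^k \ge |S_n| = n!$. By Lemma~\ref{generation} this has length $O(m + \mathtt{log}|S_n|) = O(\mathtt{log}|S_n|)$. Since the subgroup $\langle b_1,\ldots,b_m\rangle \cong S_n/N$ of $G$ has order at most $n! \le 2^k$, Lemma~\ref{finite product} shows every one of its elements really is such a bounded product, so any $G$ with $G \models \psi_1 \wedge \psi_2$ satisfies $G = \langle b_1,\ldots,b_m\rangle \cong S_n/N$; in particular $G$ is finite. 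For step (iii), recall that for $n \ge 5$ the only normal subgroups of $S_n$ are $1$, $A_n$ and $S_n$, so $G$ is isomorphic to $S_n$, $\ZZ_2$, or the trivial group; as the last two are abelian it is enough to take $\psi_3 \equiv \exists x \exists y\,[xy \neq yx]$, which has constant length and holds in $S_n$ but in neither $\ZZ_2$ nor the trivial group. Summing, $|\psi_n| = O(\mathtt{log}\,\mathtt{log}|S_n|) + O(\mathtt{log}|S_n|) + O(1) = O(\mathtt{log}|S_n|)$, so the class is LC.

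I do not expect a serious obstacle once the four lemmas of this section are in hand; the work is almost entirely length bookkeeping. The main point to check carefully is that the generation subformula $\psi_2$ --- which is genuinely of the dominant order $\mathtt{log}|S_n| = \Theta(n\,\mathtt{log}\,n)$ --- is still only $O(\mathtt{log}|S_n|)$ and not larger, while the presentation part contributes the much smaller $O(\mathtt{log}\,\mathtt{log}|S_n|)$. One should also note that $\psi_1$ already forces $|\langle b_1,\ldots,b_m\rangle| \le n!$, which is what legitimises using a single fixed exponent $k$ inside Lemma~\ref{generation} even though the ambient group $G$ is a priori arbitrary, and one must dispose of the finitely many $S_n$ with $n \le 4$ on the side.
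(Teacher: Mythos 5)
Your proposal is correct and follows essentially the same route as the paper: the three-step scheme with the Bray et al.\ presentation for $\psi_1$, Lemmas~\ref{presentation}--\ref{generation} for the length bounds, and the fact that for $n \ge 5$ the only proper quotients of $S_n$ are $\ZZ_2$ and the trivial group. The only (cosmetic) difference is $\psi_3$: you assert nonabelianness $\exists x \exists y\,[xy \neq yx]$, while the paper asserts $\eta \neq 1 \wedge \eta^2 \neq 1$ for the generator $\eta$ corresponding to the $n$-cycle; both are constant-length and rule out the two abelian quotients.
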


\begin{proof}
We can assume $n \ge 5$ and so $A_n$ is the only non-trivial normal subgroup of $S_n$.
The sentence is $\psi \equiv \exists \eta \exists \sigma_2 \ldots \exists \sigma_k [\psi_1 \wedge \psi_2 \wedge \psi_3]$ where $k$ is the number of generators in the short presentation for $S_n$, $\eta$ corresponds to the $n$-cycle $(1,\ldots,n)$ and $\sigma_2, \ldots, \sigma_k$ correspond to the rest of the generators.

The constructions of $\psi_1, \psi_2$ are exactly the same as that in the previous theorem, and so $\psi_1,\psi_2$ have length $O(\mathtt{log} \ n)$, $O(\mathtt{log}|S_n|)$ respectively.

Now let $G$ be a group and let $x_1, \ldots, x_k \in G$ such that $G \models \psi_1 \wedge \psi_2 (x_1, \ldots, x_k)$.
Then, since $\{1\}$, $A_n$, $S_n$ are the only normal subgroups of $S_n$, the group $G$ is isomorphic to $S_n$, $\ZZ_2$ or $\{1\}$.
So the formula
\[
\psi_3(\eta, \sigma_2, \ldots, \sigma_k) \equiv [\eta \neq 1 \ \wedge \ \eta^2 \neq 1]
\]
guarantees that if $G \models \psi$ then $G \cong S_n$.
Since $\psi_3$ has constant length, the length of the whole sentence $\psi$ is $O(\mathtt{log}|S_n|)$.
\end{proof}

\subsection{Abelian groups} \label{abelian}
It is known that each finite abelian group is isomorphic to a direct product of (finite) cyclic groups, and $\ZZ_m \oplus \ZZ_n \cong \ZZ_{mn}$ iff $m,n$ are coprime.
Hence each finite abelian group can be written as a unique direct product of cyclic groups of prime-power order, up to permutation of the factors.

\begin{theorem} \label{abelian thm}
The class of finite abelian groups is LC.
\end{theorem}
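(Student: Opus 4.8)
The plan is to handle this case \emph{outside} the scheme (i)--(iii), describing $H$ directly through a carefully chosen generating tuple. Using the fundamental theorem quoted at the start of this subsection, fix a primary decomposition $H\cong\bigoplus_{s=1}^{r}\ZZ_{d_s}$ with each $d_s=p_s^{i_s}$ a prime power; then $r\le\mathtt{log}_2|H|$ and $\sum_{s=1}^{r}\mathtt{log}_2 d_s=\mathtt{log}_2|H|$, so any formula whose length is a sum of $O(\mathtt{log}\,d_s)$-terms together with $O(r)$ overhead is still $O(\mathtt{log}|H|)$. I would take
\[
\psi_H\;\equiv\;\exists g_1\cdots\exists g_r\,\bigl[\chi_{\mathrm{ab}}\wedge\chi_{\mathrm{ord}}\wedge\chi_{\mathrm{ind}}\wedge\chi_{\mathrm{gen}}\bigr],
\]
with $g_s$ intended to generate the $s$-th summand. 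Here $\chi_{\mathrm{ab}}\equiv\forall x\forall y\,[xy=yx]$ has constant length; $\chi_{\mathrm{ord}}\equiv\bigwedge_{s=1}^{r}\bigl[\theta_{d_s}(g_s,1)\wedge\neg\theta_{d_s/p_s}(g_s,1)\bigr]$ says ``$g_s$ has order exactly $d_s$'' and, by Lemma \ref{repeated squaring}, has length $\sum_s O(\mathtt{log}\,d_s)=O(\mathtt{log}|H|)$; and $\chi_{\mathrm{gen}}\equiv\forall h\,\pi_r(h;g_1,\ldots,g_r)$, with $\pi_r$ from Lemma \ref{generation} but using the fixed constant $k=\lceil\mathtt{log}_2|H|\rceil$ in place of $\lceil\mathtt{log}_2|G|\rceil$, has length $O(r+\mathtt{log}|H|)=O(\mathtt{log}|H|)$. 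In any abelian $G$ satisfying $\chi_{\mathrm{ord}}$ the subgroup $\langle g_1,\ldots,g_r\rangle$ has order at most $\prod_s d_s=|H|\le 2^k$, so by Lemma \ref{finite product} each of its elements is a product of at most $2^k$ of the $g_s$; hence $\chi_{\mathrm{gen}}$ forces $G=\langle g_1,\ldots,g_r\rangle$.

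The delicate conjunct is $\chi_{\mathrm{ind}}$, which must express that the cyclic subgroups $\langle g_1\rangle,\ldots,\langle g_r\rangle$ form an internal direct sum. The naive implementation --- checking, for each $s$ in turn, that a socle generator of $\langle g_s\rangle$ avoids $\langle g_1,\ldots,g_{s-1}\rangle$ --- invokes the membership formula $\pi_{s-1}$ (of length $\Theta(\mathtt{log}|H|)$) once for each of the $r$ values of $s$, which only yields $O(\mathtt{log}^2|H|)$ and would prove PLC, not LC. The trick to reach $O(\mathtt{log}|H|)$ is to use a \emph{single} block of $r$ universal quantifiers and to confine each quantified variable to $\langle g_s\rangle$ via the cheap formula $\delta_{k_s}(x_s;g_s)$ from the proof of Lemma \ref{generation} with $k_s=\lceil\mathtt{log}_2 d_s\rceil$ (legitimate because $2^{k_s}\ge d_s\ge\mathrm{ord}(g_s)$, so $\delta_{k_s}(\,\cdot\,;g_s)$ defines $\langle g_s\rangle$). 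Explicitly,
\[
\chi_{\mathrm{ind}}\;\equiv\;\forall x_1\cdots\forall x_r\,\Bigl[\Bigl(\bigwedge_{s=1}^{r}\delta_{k_s}(x_s;g_s)\Bigr)\wedge\bigl(x_1\cdots x_r=1\bigr)\;\rightarrow\;\bigwedge_{s=1}^{r}x_s=1\Bigr],
\]
whose length is $O(r)+\sum_s O(k_s)+O(r)=O(\mathtt{log}|H|)$. Altogether $|\psi_H|=O(\mathtt{log}|H|)$.

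For soundness: if $G\models\psi_H$ with witnesses $g_1,\ldots,g_r$, then $G$ is abelian, each $\langle g_s\rangle\cong\ZZ_{d_s}$ by $\chi_{\mathrm{ord}}$, so $\delta_{k_s}(\,\cdot\,;g_s)$ defines $\langle g_s\rangle$ in $G$ and $\chi_{\mathrm{ind}}$ says precisely that $\langle g_1\rangle+\cdots+\langle g_r\rangle$ is direct; hence $\langle g_1,\ldots,g_r\rangle\cong\bigoplus_s\ZZ_{d_s}=H$, and $\chi_{\mathrm{gen}}$ forces $G$ to coincide with this subgroup, giving $G\cong H$. That $H$ itself satisfies $\psi_H$ (with $g_s$ a generator of the $s$-th summand) is immediate. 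I expect the main obstacle to be exactly the length control for $\chi_{\mathrm{ind}}$ --- replacing the per-generator conjunction of non-membership conditions by one universally quantified statement that still pins down directness --- together with the slightly fussy but routine check that $\chi_{\mathrm{gen}}$ with a hard-coded constant $k$, rather than one depending on the unknown $|G|$, nonetheless forces $G=\langle g_1,\ldots,g_r\rangle$.
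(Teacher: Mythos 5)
Your proposal is correct, and it follows the paper's overall skeleton (primary decomposition, existential quantification over a generating tuple, the repeated-squaring formulas $\theta_n$ of Lemma \ref{repeated squaring} and the membership formulas $\delta_k$/$\pi_n$ of Lemma \ref{generation}, total length $O(\mathtt{log}|H|)$), but it handles the one genuinely delicate conjunct --- independence --- by a different and in fact cleaner device. The paper never quantifies over the cyclic subgroups themselves: it groups the generators by their associated prime, passes to the quotient $G/p_iG$, and expresses independence there via coefficient-bounded linear combinations $\sum_j z_j a_j$ with $0 \le z_j \le 2^{\lceil \mathtt{log}_2 p_i\rceil}$, using the formulas $\pi'_i$ of length $O(\mathtt{log}\,p_i)$; this requires the prime-grouping bookkeeping and the side condition that no $a_j$ lies in $p_iG$. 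You instead let $\delta_{k_s}(\,\cdot\,;g_s)$ with $k_s=\lceil\mathtt{log}_2 d_s\rceil$ define $\langle g_s\rangle$ outright (legitimate once $\chi_{\mathrm{ord}}$ pins the order of $g_s$ to exactly $d_s\le 2^{k_s}$) and state the internal-direct-sum criterion ``$x_1\cdots x_r=1$ with $x_s\in\langle g_s\rangle$ implies all $x_s=1$'' in a single universally quantified block of length $O(r)+\sum_s O(\mathtt{log}\,d_s)=O(\mathtt{log}|H|)$; this makes the quotient argument, the equivalence classes $[i]$, and the paper's ``strictly speaking'' caveat unnecessary, at the cost of nothing --- the length bound is the same since $\sum_s\mathtt{log}\,d_s=\mathtt{log}|H|$. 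One small direction mix-up worth fixing: Lemma \ref{finite product} together with $2^k\ge|H|$ is what shows $H$ itself satisfies $\chi_{\mathrm{gen}}$; that a model of $\chi_{\mathrm{gen}}$ equals $\langle g_1,\ldots,g_r\rangle$ is immediate, since $\delta_k(h;g_1,\ldots,g_r)$ already places $h$ in that subgroup.
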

\begin{proof}
Let $H=\bigoplus_{1 \le i \le n} \ZZ_{q_i}$ where each $q_i$ has the form $q_i = p_i^{z_i}$ for some prime $p_i$ and some positive integer $z_i$.
Then $H$ has a presentation
\[
\langle~a_1,\ldots,a_n~|~a_1^{q_1},\ldots,a_n^{q_n}, [a_j,a_k]~(1 \le j < k \le n)~\rangle
\]
where each $a_i$ corresponds to a generator of $\ZZ_{q_i}$.
We follow the scheme again (i.e.\ the sentence $\psi$, written additively, has the form ${\psi \equiv\exists a_1 \ldots \exists a_n [\psi_1 \wedge \psi_2 \wedge \psi_3]}$), but $\psi_1$ is slightly different here.
Since saying ``every element commutes with each other'' requires a shorter formula than saying ``every commutator commutes with each other'', $\psi_1$ is
\[
\psi_1(a_1,\ldots,a_n) \equiv \forall g \forall h [g,h]=0 \ \wedge \ \bigwedge_{1 \le i \le n} \theta_{q_i}(a_i,0).
\]
(Recall that $H \models \theta_n(x,y)$ iff $x^n=y$ holds in $H$.)
Clearly it has length $O(\mathtt{log}|H|)$.

The second formula $\psi_2$ is exactly the same as that of the previous example;
\[
\psi_2(a_1,\ldots,a_n) \equiv \forall g [\pi_k(g;a_1,\ldots,a_n)]
\]
where $k = \lceil \mathtt{log}|H| \rceil$.
It has length $O(\mathtt{log}|H|)$.

Now let $G$ be a group written additively and let $x_1,\ldots,x_n \in G$ such that ${G \models \psi_1 \wedge \psi_2(x_1,\ldots,x_n)}$, then we know $G$ is abelian and generated by $x_1,\ldots,x_n$.
For $G$ to be isomorphic to $H$, it suffices that $p_i^{z_i-1} \cdot x_i \neq 0$ for each $i$, and $x_1,\ldots,x_n$ form an independent set in the sense that if $\sum_{1 \le i \le n} \alpha_i x_i = 0$ for some integers $\alpha_i$, then $\alpha_i x_i = 0$ for each~$i$.

Recall that each $x_i$ satisfies $q_i x_i = p_i^{z_i} x_i = 0$ where $p_i$ is prime.
We define a relation $\sim$ on $\{1,\ldots,n\}$ by $i \sim j$ iff $p_i=p_j$.
Now clearly $\sim$ is an equivalence relation.
We denote by $[i], \Omega$ the equivalence class containing $i$ and the set of all the equivalence classes respectively.
Then it is easy to see that if $S_{[i]} = \{x_j~|~j \in [i]\}$ is independent for each $[i] \in \Omega$, then the whole group $G$ is independent.
So the last formula $\psi_3$ is
\[
\psi_3(a_1,\ldots,a_n) \equiv \bigwedge_{1 \le i \le n} \neg \left[ \theta_{p_i^{z_i-1}}(a_i,0) \right] \ \wedge \ \bigwedge_{[i] \in \Omega} \xi_{[i]}(a_1,\ldots,a_n)
\]
where each $\xi_{[i]}$ expresses that $S_{[i]}+p_i G = \{x_j + p_i G~|~j \in [i]\}$ is independent.
This formula is correct because $S_i$ is independent iff $G/p_i G \cong \bigoplus_{j \in [i]} \ZZ_{p_i}^{(j)}$ where each $\ZZ_{p_i}^{(j)}$ is a copy of  $\ZZ_{p_i}$ iff $S_{[i]}+p_i G$ is independent.
(Strictly speaking, we need the assumption that $S_{[i]}+p_i G$ does not contain the identity element $p_i G$, which is the first part of $\xi_{[i]}$ below.)
As a part of $\xi_{[i]}$, we use a modified version of the formula~$\pi_n$;
\[
\pi'_i(g;x) = \delta_k(g;x)
\]
where $k = \lceil \mathtt{log_2} \ p_i \rceil$ and $\delta_k$ is as defined in Lemma \ref{generation}.
So $G \models \pi'_i(g;x)$ iff $g=z \cdot x$ for some non-negative integer $z \le r$ where $r$ is the smallest power of $2$ not smaller than $p_i$.
In particular, $G \models \pi'_i(z \cdot x;x)$ for all $0 \le z < p_i$.
Note that $\pi'_i$ has length $O(\mathtt{log} \ p_i)$.
Now we are ready to write~$\xi_{[i]}$;
\[
\begin{split}
\xi_{[i]}(a_1,\ldots,a_n) \equiv \ &\bigwedge_{j \in [i]} \nexists b \left[ \theta_{p_i}(b,a_j) \right] \ \wedge\\
&\forall b_1 \ldots \forall b_{\lambda(i)} \left[ \left( \bigwedge_{j \in [i]} \pi'_i(a_j,b_{\phi(j)}) \wedge \exists c \left[ \theta_{p_i} \left(c,\sum_{j \in [i]} b_{\phi(j)}\right) \right] \right) \right.\rightarrow\\
&\hspace{125pt}\left. \exists c_1, \ldots, \exists c_{\lambda(i)} \bigwedge_{j \in [i]} \theta_{p_i} (c_{\phi(j)},b_{\phi(j)}) \right]
\end{split}
\]
where $\lambda(i)$ is the size of $[i]$ and $\phi$ is a bijection from $[i]$ to $\{1,\ldots,\lambda(i)\}$.
The second part says that if a linear combination $\sum_{j \in [i]} z_j a_j$ is in $p_i G$ for some non-negative integers ${z_j \le r = 2^{\lceil \mathtt{log_2}~p_i \rceil}}$, then $z_j a_j \in p_i G$ for each $j$.

Now consider the length of $\psi_3$.
For each $i$, $\psi_3$ contains:
\begin{itemize}
\item one $\theta_{p_i^{z_i-1}}$, which has length $\simeq 10(z_i-1)\lfloor \mathtt{log_2}~p_i \rfloor$
\item three $\theta_{p_i}$, each of which has length $\simeq 10\lfloor \mathtt{log_2}~p_i \rfloor$
\item one $\pi'_i$, which has length $\simeq 26 \lceil \mathtt{log_2}~p_i \rceil$.
\end{itemize}
Hence the length of the formula $\psi_3$ has order of
\[
\sum_{1 \le i \le n} z_i \ \mathtt{log} \ p_i = \mathtt{log} \left( \prod_{1 \le i \le n} p_i^{z_i} \right) = \mathtt{log}|H|
\]
and so the length of the whole sentence $\psi$ is $O(\mathtt{log}|H|)$.
\end{proof}

\subsection{Upper unitriangular matrix groups} \label{unitriangular}
In Subsection \ref{UT}, we analyzed the structure and some properties of the group $UT_3(\ZZ)$.
Using some of those results, and also some part of the previous theorem, we consider similar finite groups, namely ${UT_3(n)=UT_3(\ZZ_n)}$ where $n$ is any positive integer.
Each $UT_3(n)$ is isomorphic to the free 2-generated class 2 nilpotent group with exponent $n$.
Their freeness can be shown in a similar way to the case of $UT_3(\ZZ)$ (see \cite[Exercise 16.1.3]{Kar.Mer:79}).

\begin{proposition} \label{UT finite thm}
The class of the unitriangular groups $UT_3(n)$ is LC.
\end{proposition}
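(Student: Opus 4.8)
The plan is to run the three-step scheme of Section~\ref{PLC} (as in Theorems~\ref{symmetric thm} and~\ref{abelian thm}), using the structural description of $UT_3(\ZZ_n)$ from Subsection~\ref{UT}. Since $|UT_3(n)| = n^3$, LC asks for a sentence of length $O(\log n)$, and the sentence will be $\psi \equiv \exists a \exists b\,[\psi_1 \wedge \psi_2 \wedge \psi_3]$.

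For step~(i) I would present $UT_3(n)$ as the free $2$-generated class~$2$ nilpotent group of exponent $n$, namely $\langle\, a, b \mid a^n,\ b^n,\ [[a,b],a],\ [[a,b],b] \,\rangle$ (the relation $[a,b]^n = 1$ is then automatic, since $[a,b]^n = [a^n,b] = 1$ by Lemma~\ref{nilpotent commutators}). Writing $a^n$ and $b^n$ by repeated squaring (Lemma~\ref{repeated squaring}) gives a formula $\psi_1$ of length $O(\log n)$, and by Lemma~\ref{presentation} any $G$ with $G \models \psi_1(x_1,x_2)$ has $\langle x_1, x_2\rangle \cong UT_3(n)/N$ for some $N \trianglelefteq UT_3(n)$. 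Step~(ii) is $\psi_2 \equiv \forall g\,\pi_k(g; x_1, x_2)$ with $k = \lceil \log_2 n^3\rceil$, which by Lemma~\ref{generation} expresses that $x_1, x_2$ generate $G$ and has length $O(\log n)$. After $\psi_1 \wedge \psi_2$ we therefore have a group $G = \langle \bar a, \bar b\rangle \cong UT_3(n)/N$.

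The substance is in step~(iii): forcing $N = 1$ by a short formula. Running the matrix computation of Lemma~\ref{UT center} over $\ZZ_n$ shows that $Z(UT_3(n)) = \langle [a,b]\rangle \cong \ZZ_n$ equals $UT_3(n)'$ and contains every commutator; hence in $G \cong UT_3(n)/N$ the commutators fill out a cyclic subgroup $G'$, the image of $\langle [a,b]\rangle$, whose order divides $n$ and equals $n$ exactly when $N \cap Z(UT_3(n)) = 1$ --- which, $UT_3(n)$ being nilpotent, happens only for $N = 1$. So it is enough for $\psi_3$ to express ``$|G'| = n$'' in length $O(\log n)$. This is the step I expect to be the real obstacle: the naive reading ``$[\bar a,\bar b]^n = 1$ and $[\bar a,\bar b]^{n/p} \neq 1$ for every prime $p \mid n$'' has length $O(\omega(n)\log n)$, which is only $O(\log^2 n)$ and gives PLC rather than LC.

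The way around this copies the idea of Theorem~\ref{abelian thm}: split the modulus into prime powers, $n = \prod_{i=1}^r p_i^{z_i}$, and let $\psi_3$ assert the existence of elements $c_1, \dots, c_r$ such that, for each $i$, $c_i$ is a commutator of $G$ (a constant-length clause $\exists x \exists y\,(c_i = [x,y])$, which by the previous paragraph puts $c_i$ into the cyclic group $G'$) and $c_i$ has order exactly $p_i^{z_i}$, expressed by $\theta_{p_i^{z_i}}(c_i, 1) \wedge \neg\theta_{p_i^{z_i - 1}}(c_i, 1)$. Since $c_i \in G'$ and $|G'|$ divides $n$, having an element of order $p_i^{z_i}$ for every $i$ forces $\prod_i p_i^{z_i} = n$ to divide $|G'|$, so $|G'| = n$ and $N = 1$; conversely $UT_3(n)$ itself satisfies $\psi_3$ via $c_i = [a, b^{e_i}]$ for the CRT idempotents $e_i$ modulo $n$. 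Each block contributes $O(z_i \log p_i)$ symbols through Lemma~\ref{repeated squaring}, so $|\psi_3| = O\big(r + \textstyle\sum_i z_i \log p_i\big) = O(\log n)$, and hence $|\psi| = O(\log n) = O(\log |UT_3(n)|)$, proving LC. All the algebra in step~(iii) is immediate from Subsection~\ref{UT}; the only delicate point is keeping $\psi_3$ short, which is exactly why the prime-power splitting is needed.
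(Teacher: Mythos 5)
Your argument is correct, but it is not the route the paper takes: the paper explicitly exempts this last example from the three-step scheme. Instead of invoking Lemma~\ref{presentation}, the paper's proof pins down the center $Z=\langle [a,b]\rangle$ as cyclic of order exactly $n$ (by the same CRT-style prime-power trick you use in $\psi_3$), asserts that $Z$ coincides with the set of commutators, and then defines a first-order bijection $\Phi:G\to Z\times Z\times Z$ by a formula $\phi(h,x,y,z;a,b)$; this forces $|G|=n^3$, after which the normal form $a^{\alpha}b^{\beta}c^{\gamma}$ and the class-2 multiplication rule determine $G$ up to isomorphism. Your route instead follows the scheme: $\psi_1$ from the presentation $\langle a,b\mid a^n,b^n,[[a,b],a],[[a,b],b]\rangle$, $\psi_2$ from Lemma~\ref{generation}, and then the quotient $UT_3(n)/N$ is forced to be all of $UT_3(n)$ by making $G'$ have order $n$, using that $G'$ is the (cyclic) image of $\langle [a,b]\rangle$ and that a nontrivial normal subgroup of a nilpotent group meets the center --- the same device used in Theorem~\ref{UT thm}. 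This buys you a shorter argument: no bijection $\Phi$, no counting, no verification of the multiplication formula. What the paper's approach buys is independence from the presentation claim: your step~(i) silently needs that the displayed presentation really presents $UT_3(\ZZ_n)$, and the paper's phrase ``free $2$-generated class $2$ nilpotent group with exponent $n$'' is slightly loose here, since for even $n$ the group $UT_3(\ZZ_n)$ contains elements of order $2n$ (e.g.\ $ab$ has $(ab)^n=[b,a]^{n/2}\neq 1$ by Lemma~\ref{nilpotent commutators}); the presentation with relators $a^n, b^n$ and the two class-2 relators is nevertheless correct, but you should add the one-line order count (the normal closure of $\{a^n,b^n\}$ in the Heisenberg group is $\{a^{ni}b^{nj}c^{nk}\}$, so the presented group has order $n^3$ and maps onto $UT_3(\ZZ_n)$), rather than lean on the ``exponent $n$'' description. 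With that line added, your proof is complete and yields the same $O(\log n)=O(\log|UT_3(n)|)$ bound.
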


\begin{proof}
Let ${a=t_{23}(1)}$, ${b=t_{12}(1)}$.
(Recall that $t_{mn}(k)$ denotes the 3-by-3 matrix with $1$ in its diagonal entries, $k$ in the $m$-th row  $n$-th column entry and $0$ everywhere else.)
We begin by analyzing the structure of the group $H=UT_3(n)$.
Since \[ \left[ \left(
\begin{array}{@{}ccc@{}}
1 & \alpha_1 & \beta_1  \\
0 & 1 & \gamma_1 \\
0 & 0 & 1
\end{array}
\right), \left(
\begin{array}{@{}ccc@{}}
1 & \alpha_2 & \beta_2  \\
0 & 1 & \gamma_2 \\
0 & 0 & 1
\end{array}
\right) \right] = t_{13}(\alpha_1\gamma_2-\alpha_2\gamma_1), \]
holds in $H$, the center $Z$ of $H$ is
\[
Z=\{t_{13}(z)~|~z \in \ZZ_n\}=\langle c \rangle
\]
where $c=[a,b]=t_{13}(1)$.
Now $H/Z$ is isomorphic to $\ZZ_n \oplus \ZZ_n$ (generated by $aZ,bZ$) which is abelian, so $H$ is class 2 nilpotent.
Hence each element $h \in H$ can be written as a product of the form $h=xyz$ where $x \in \langle a \rangle$, $y \in \langle b \rangle$, $z \in Z$, and $H'$~coincides with the set of commutators by Lemma \ref{nilpotent commutators}.

Let $\phi$ be the formula
\[
\begin{split}
\phi(h,x,y,z;a,b) \equiv \exists u \exists v \{&\pi'_n(u;a)~\wedge~\pi'_n(v;b)~\wedge~\forall w~[z,w]=1~\wedge\\
&h=uvz~\wedge~x=[u,b]~\wedge~y=[a,v]\}
\end{split}
\]
with parameters $a,b$ where $\pi'_n(r;s)=\delta_k(r;s)$, $k=\lceil \mathtt{log_2}~n \rceil$
(i.e.~$H \models \pi'_n(r;s)$ iff $s=r^z$ for some $z \in \ZZ_n$).
Then it defines a bijection $\Phi : H \rightarrow Z \times Z \times Z$ such that $\Phi(h)=(x,y,z)$ for
${h = \left(
\begin{array}{@{}ccc@{}}
1 & y & z \\
0 & 1 & x \\
0 & 0 & 1
\end{array}
\right) \in H}$.
Note that $\phi$ has length $O(\mathtt{log}~n)$.

Now we are ready to write the sentence $\psi$ describing $H$.
Let $n = \prod_{1 \le i \le m} p_i^{z_i}$ be the prime decomposition of $n$.
Then the sentence is $\psi \equiv \exists a \exists b [\psi_1 \wedge \ldots \wedge \psi_6]$ where $\psi_1$ says that $a,b$ have order dividing $n$
\[
\psi_1(a,b) \equiv \theta_n(a,1)~\wedge~\theta_n(b,1)
\]
$\psi_2$ says that $c$ has order $n$ (using the previous result)
\[
\begin{split}
\psi_2(a,b) \equiv~&\theta_n([a,b],1)~\wedge \\
&\exists c_1 \ldots \exists c_m \left[ \bigwedge_{1 \le i \le m} \left\{ \pi'_n(c_i;[a,b])~\wedge~\theta_{p_i^{z_i}}(c_i,1)~\wedge~\neg \theta_{p_i^{z_i-1}}(c_i,1) \right\} \right]
\end{split}
\]
$\psi_3$ says that $H'=Z=\langle c \rangle$ coincides with the set of commutators
\[
\begin{split}
\psi_3(a,b) \equiv~&\forall r \forall s \forall t \forall u \exists v \exists w~[r,s][t,u]=[v,w]~\wedge~\forall r \forall s \forall h [[r,s],h]=1~\wedge \\
&\forall z \{ \forall h~[z,h]=1 \rightarrow (\exists r \exists s~[r,s]=z~\wedge~\pi'_n([a,b],z)\}
\end{split}
\]
$\psi_4,\psi_5$ say that $\Phi(h)$ is a function $H \rightarrow Z \times Z \times Z$
\[
\begin{split}
\psi_4(a,b) \equiv~&\forall h \exists x \exists y \exists z~\phi(h,x,y,z;a,b)\\
\psi_5(a,b) \equiv~&\forall h \forall x_1 \forall x_2 \forall y_1 \forall y_2 \forall z_1 \forall z_2\\
&[\{\phi(h,x_1,y_1,z_1;a,b)~\wedge~\phi(h,x_2,y_2,z_2;a,b)\} \rightarrow\\ &\hspace{100pt}\{x_1=x_2~\wedge~y_1=y_2~\wedge~z_1=z_2\}]
\end{split}
\]
and $\psi_6$ says that $\Phi$ is surjective
\[
\psi_6(a,b) \equiv~\forall x \forall y \forall z \{\forall g~[x,g]=[y,g]=[z,g]=1 \rightarrow \exists h~\phi(h,x,y,z;a,b)\}.
\]

It can be easily seen that $\psi$ has length $O(\mathtt{log}~n)$.

Now let $G$ be a group satisfying $\psi$ with witnesses $a,b \in G$.
Then from $\psi_2,\psi_3$, the center $Z$ of $G$ is cyclic of order $n$ generated by $c=[a,b]$.
Since $\phi$ defines an surjective function $\Phi : G \rightarrow Z \times Z \times Z$ from $\psi_4,\psi_5,\psi_6$, $G$ has size at least $n^3$.
But since $a,b$ have order at most $n$ from $\psi_1$ and each element $g \in G$ can be written as a product of the form $g=uvz$ where $u \in \langle a \rangle$, $v \in \langle b \rangle$, $z \in Z$ from $\psi_4$, $G$ cannot have more than $n^3$ elements.
Hence $G$ has precisely $n^3$ elements, and has the form $G=\{a^{\alpha} b^{\beta} c^{\gamma}~|~\alpha, \beta, \gamma \in \ZZ_n \}$.

Since $G$ is class 2 nilpotent from $\psi_3$ and $c=[a,b]$, one can deduce the equation
\[
a^{\alpha_1} b^{\beta_1} c^{\gamma_1} \cdot a^{\alpha_2} b^{\beta_2} c^{\gamma_2} = a^{\alpha_1+\alpha_2} b^{\beta_1+\beta_2} c^{\gamma_1+\gamma_2-\alpha_2 \beta_1}
\]
and it determines the group uniquely up to isomorphism.
\end{proof}

The short presentation conjecture \cite{Babai:97} asks whether there exists a constant $C$ such that every finite group $G$ has a presentation of length $O({\mathtt {log}}^c~|G|)$. In analogy, we ask:

\begin{question} Is the class of finite groups polylogarithmically compressible (PLC)? Is it in fact logarithmically compressible? \end{question}


\def\cprime{$'$}

\end{document}